\def\bcb{border-collision bifurcation}
\def\cS{\mathcal{S}}
\def\cT{{J}}
\def\cW{\mathcal{W}}
\def\nsf{border-collision fold}
\def\pwl{piecewise-linear}
\def\pws{piecewise-smooth}
\def\rot{rotation number}
\def\sew{symbol sequence}
\def\shr{shrinking point}
\def\sL{{\sf L}}
\def\sR{{\sf R}}
\def\sw{switching manifold}
\def\tong{resonance tongue}
\def\Ther{Thm.}
\def\Lemm{Lemma}
\def\Defi{Def.}
\begin{document}

\newtheorem{theorem}{Theorem}
\newtheorem{corollary}[theorem]{Corollary}
\newtheorem{lemma}[theorem]{Lemma}
\theoremstyle{definition}
\newtheorem{definition}{Definition}

\title{Resonance near Border-Collision Bifurcations in Piecewise-Smooth, Continuous Maps.}
\author{
D.J.W.~Simpson$^{\dagger}$ and J.D.~Meiss$^{\ddagger}$\thanks{
DJWS acknowledges support from an NSERC Discovery Grant.
JDM acknowledges support from NSF grant DMS-0707659.}
\\\\
$^{\dagger}$Department of Mathematics\\
University of British Columbia\\
Vancouver, BC, V6T1Z2\\
Canada\\\\
$^{\ddagger}$Department of Applied Mathematics\\
University of Colorado\\
Boulder, CO, 80309-0526\\
USA}
\maketitle

\begin{abstract}
Mode-locking regions (resonance tongues) formed by
border-collision bifurcations of piecewise-smooth,
continuous maps commonly exhibit a
distinctive sausage-like geometry with pinch points called ``shrinking points".
In this paper we extend our unfolding of the piecewise-linear case
[{\em Nonlinearity}, 22(5):1123-1144, 2009] to show how shrinking points are
destroyed by nonlinearity. We obtain a codimension-three unfolding of this
shrinking point bifurcation for $N$-dimensional maps. We show that the destruction
of the shrinking points generically occurs by the creation of a curve
of saddle-node bifurcations that smooth one boundary of the sausage,
leaving a kink in the other boundary.
\end{abstract}

\section{Introduction}
\label{sec:INTRO}

Piecewise-smooth systems are used to model
a vast range of physical systems involving nonsmooth behavior
\cite{DiBu08,ZhMo03,BaVe01,LeNi04}.
In this paper we study \pws, continuous maps, i.e.
\begin{equation}
x_{i+1} = F(x_i) \;,
\label{eq:genMap}
\end{equation}
where $x_i \in \mathbb{R}^N$ and
$F$ is everywhere continuous but nondifferentiable on
codimension-one surfaces in $\mathbb{R}^N$ called {\em \sw s}.
Such maps arise as Poincar\'{e} maps of Filippov systems
near grazing-sliding and corner collisions \cite{DiKo02,DiBu01c}
or of some time-dependent \pws~flows \cite{ZhMo08b}. They
are often used as mathematical models of various discrete, nonsmooth systems,
see e.g. \cite{PuSu06}.

A fundamental and unique bifurcation of \pws, continuous maps results
from the collision of a fixed point with a \sw; it is known as a {\em \bcb}.
Except in degenerate cases,
a~\bcb~may be classified as either a
{\em \nsf}~at which two fixed points collide and annihilate,
or a {\em border-collision persistence} at which
a single fixed point ``crosses" the \sw~\cite{DiBu08,DiFe99}.
Note that the collision of one point of a periodic solution of (\ref{eq:genMap})
with a \sw~is also a \bcb~for the $n^{\rm th}$ iterate of 
(\ref{eq:genMap}) \cite{DiFe99,SiMe09}.
Though complicated dynamics may be born in \bcb s, in this paper 
we study only the creation of periodic solutions.

We assume that the derivatives of the smooth components of (\ref{eq:genMap}), $D_x F$, are locally bounded (we exclude from consideration, for instance,
square-root type maps \cite{DiBu08,No01}).
Then, generic \bcb s of (\ref{eq:genMap}) may be described by \pwl~maps.
More precisely, structurally-stable dynamics of a \pws, continuous map
near a \bcb~are described by the \pwl, series expansion about the bifurcation.
A consequence is that, to lowest order, the structurally-stable invariant sets 
created at \bcb s grow linearly as the bifurcation parameter varies.

One example is
the two-dimensional, \pws, continuous map
\begin{equation}
f_\mu (x) =
\left\{ \begin{array}{lc}
\mu \left[ \begin{array}{c} 1 \\ 0 \end{array} \right] +
\left[ \begin{array}{cc}
2 r_\sL \cos({2 \pi \omega_\sL}) & 1 \\ -r_\sL^2 & 0 \end{array} \right] x +
g^\sL(x), & s \le 0 \\
\mu \left[ \begin{array}{c} 1 \\ 0 \end{array} \right] +
\left[ \begin{array}{cc}
\frac{2}{s_\sR} \cos({2 \pi \omega_\sR}) & 1 \\ -\frac{1}{s_\sR^2} & 0 \end{array} \right] x +
g^\sR(x), & s \ge 0
\end{array} \right. \;,
\label{eq:DNSex}
\end{equation}
where 
\begin{equation}
s \equiv e_1^{\sf T} x~~({\rm the~first~component~of~the~vector~}
x \in \mathbb{R}^2) \;, \nonumber
\end{equation}
$0 < r_\sL, s_\sR < 1$, $0 < \omega_\sL, \omega_\sR < \frac{1}{2}$,
$\mu \in \mathbb{R}$ is assumed to be small,
and the functions $g^\sL(x)$ and $g^\sR(x)$
contain the terms of $f_\mu$ that are nonlinear in $x$.
%, that is they are $O(|x|^2)$.
The map (\ref{eq:DNSex}) is continuous only if
$g^\sL(x) = g^\sR(x)$ whenever $s = 0$, i.e., on the \sw.

A \bcb~for (\ref{eq:DNSex}) occurs at the origin when $\mu = 0$.
If this bifurcation is nondegenerate, the local dynamics are independent of the
nonlinear components, $g^\sL$ and $g^\sR$.

The piecewise-linear version of (\ref{eq:DNSex}), i.e., with
\begin{equation}
g^\sL(x) = g^\sR(x) =
\left[ \begin{array}{c} 0 \\ 0 \end{array} \right] \;,
\label{eq:nonlinearity0}
\end{equation}
is the canonical, \pwl~form for a border-collision analogue
to a smooth Neimark-Sacker bifurcation.
Indeed the fixed point for $\mu < 0$
has a pair of complex multipliers
($\lambda_{\pm} = r_\sL {\rm e}^{\pm 2 \pi {\rm i} \omega_\sL}$)
inside the unit circle that ``jump'' at $\mu = 0$
outside the unit circle
($\lambda_{\pm} = \frac{1}{s_\sR} {\rm e}^{\pm 2 \pi {\rm i} \omega_\sR}$)
for $\mu > 0$.
Depending upon the precise choice of parameter values,
this \bcb~may generate periodic, quasiperiodic,
or chaotic solutions as well as combinations of these
\cite{SiMe08b,NuYo92,ZhMo06b,SuGa08}.
This class of \bcb s has been seen in
models of DC/DC power converters \cite{ZhMo08b}
and optimization in economics \cite{PuSu06,SuGa04}.

%%%%%%%%%%%%%%%%%%%%%%%%%%%%%%%%%%%%%%%%%%%%%%%%%%%%%%%%%%%%%
\begin{figure}[ht]
\begin{center}
\includegraphics[width=14.7cm,height=6cm]{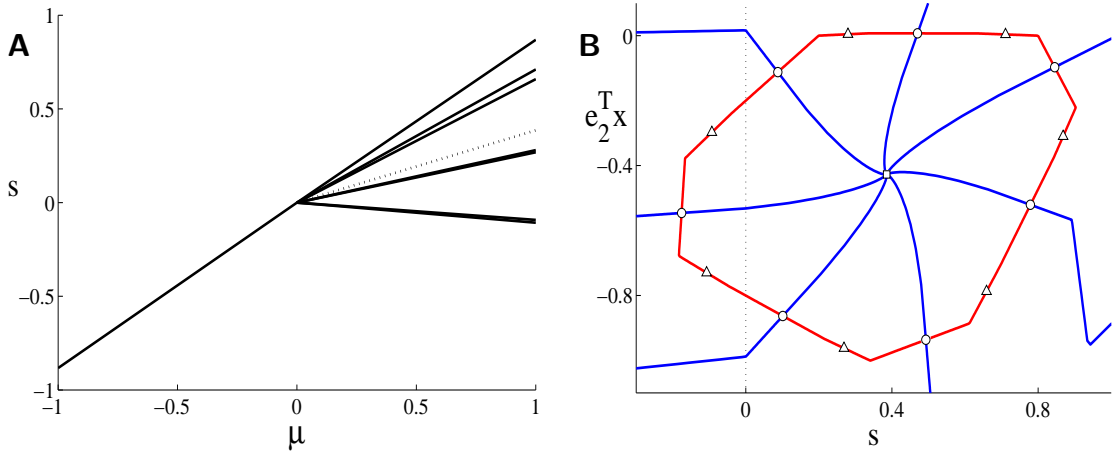}
%\setlength{\unitlength}{1cm}
%\begin{picture}(14.7,6)
%\put(0,0){\includegraphics[width=7.2cm,height=6cm]{bifDiag_2_7b}}
%\put(7.7,0){\includegraphics[width=7.2cm,height=6cm]{manifolds_2_7b}}
%\put(0,5.3){\large \sf \bfseries A}
%\put(7.7,5.3){\large \sf \bfseries B}
%\end{picture}
\caption{
Panel A shows a bifurcation diagram of (\ref{eq:DNSex}) with (\ref{eq:nonlinearity0})
when $r_\sL = 0.2$, $s_\sR = 0.95$ and $\omega_\sL = \omega_\sR = 0.287$.
When $\mu < 0$ the unique fixed point is attracting (the solid line) and
when $\mu > 0$ it is repelling (the dotted line) and
a pair of period-seven orbits exist.
The solid lines for $\mu >0$ in panel A show the attracting 7-cycle
(two pairs of points have similar $s$-values).
Panel B shows a phase portrait when $\mu = 1$. The dotted vertical line is the \sw.
The attracting [saddle] 7-cycle is indicated by triangles [circles].
The unstable manifold of the saddle forms an invariant circle (red);
the curves forming its stable manifold (blue) intersect
at the repelling fixed point (square).
\label{fig:ex_2_7}
}
\end{center}
\end{figure}
%%%%%%%%%%%%%%%%%%%%%%%%%%%%%%%%%%%%%%%%%%%%%%%%%%%%%%%%%%%%%

Fig.~\ref{fig:ex_2_7}-A shows an example of a bifurcation diagram for (\ref{eq:DNSex})
with (\ref{eq:nonlinearity0})
when a pair of period-seven orbits
(7-cycles) is created at $\mu = 0$.
These two orbits are shown in panel B; one is attracting and the other is a saddle.
The unstable manifold of the saddle
forms an invariant circle that contains the attracting orbit.
This \bcb~is nondegenerate; indeed, 
if we were to relax (\ref{eq:nonlinearity0}) then there is 
an $\varepsilon > 0$ such that whenever $0 < \mu < \varepsilon$
the dynamics are topologically equivalent to
Fig.~\ref{fig:ex_2_7}-B near the origin.
An example is shown in Fig.~\ref{fig:bifDiag_2_7c} where
\begin{equation}
g^\sL(x) = \left[ \begin{array}{c} s^2 \\ 0 \end{array} \right] \hspace{20mm}
g^\sR(x) = \left[ \begin{array}{c} 0 \\ 0 \end{array} \right] \;.
\label{eq:nonlinearity1}
\end{equation}
For this nonlinear system, there is also a saddle-node pair of period-seven orbits created
at the \bcb. The stable 7-cycle is attracting up to $\mu \approx 1.202$.

%%%%%%%%%%%%%%%%%%%%%%%%%%%%%%%%%%%%%%%%%%%%%%%%%%%%%%%%%%%%%
\begin{figure}[ht]
\begin{center}
\includegraphics[width=7.2cm,height=6cm]{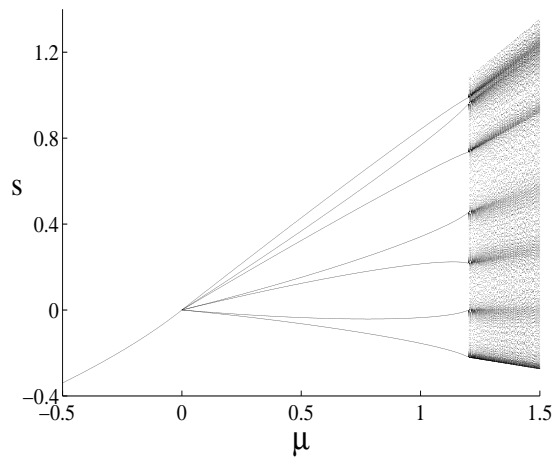}
%\setlength{\unitlength}{1cm}
%\begin{picture}(7.2,6)
%\put(0,0){\includegraphics[width=7.2cm,height=6cm]{bifDiag_2_7c}}
%\end{picture}
\caption{
A bifurcation diagram showing attracting orbits of (\ref{eq:DNSex})
with nonlinearity (\ref{eq:nonlinearity1})
at the same parameter values in Fig.~\ref{fig:ex_2_7}.
When $\mu$ is small the dynamical behavior near the origin
is topologically equivalent to the linear case.
However, at $\mu \approx 1.202$, the attracting 7-cycle undergoes border-collision
beyond which there exists a complicated attracting set.
\label{fig:bifDiag_2_7c}
}
\end{center}
\end{figure}
%%%%%%%%%%%%%%%%%%%%%%%%%%%%%%%%%%%%%%%%%%%%%%%%%%%%%%%%%%%%%

Note that piecewise-linear maps are particularly straightforward to analyze
because any
periodic orbit is the solution to a linear system.
%and the inverse map (if it exists) is easily computable.
Furthermore, linearity implies that if $\mathcal{I}$ is an invariant set of $f_\mu$
then $\lambda \mathcal{I}$ is an invariant set of $f_{\lambda \mu}$
for any $\lambda > 0$.
Consequently it suffices to consider $\mu = -1,0,1$.

A two-parameter bifurcation diagram
of the piecewise-linear case of (\ref{eq:DNSex})
is shown in Fig.~\ref{fig:rrEqrL_2} for $\mu = 1$.
The colored regions are resonance (or Arnold) tongues within
which there is an attracting periodic solution.
Since the piecewise-linear case of the two-dimensional map (\ref{eq:DNSex})
has a unique fixed point, we may define a \rot~for orbits as the average
change in angle per iteration about the
fixed point \cite{SiMe08b, Le01,Fr90,Sc57}.
Consequently the \tong s can be labeled by the \rot, $m/n$,
of the corresponding periodic solution.
The orbits shown in Fig.~\ref{fig:ex_2_7}
lie in the $2/7$-tongue; this tongue intersects the $s_R=1$ line
at $\omega_R = 2/7 \approx 0.2857$.

%%%%%%%%%%%%%%%%%%%%%%%%%%%%%%%%%%%%%%%%%%%%%%%%%%%%%%%%%%%%%
\begin{figure}[ht]
\begin{center}
\includegraphics[width=15cm,height=6cm]{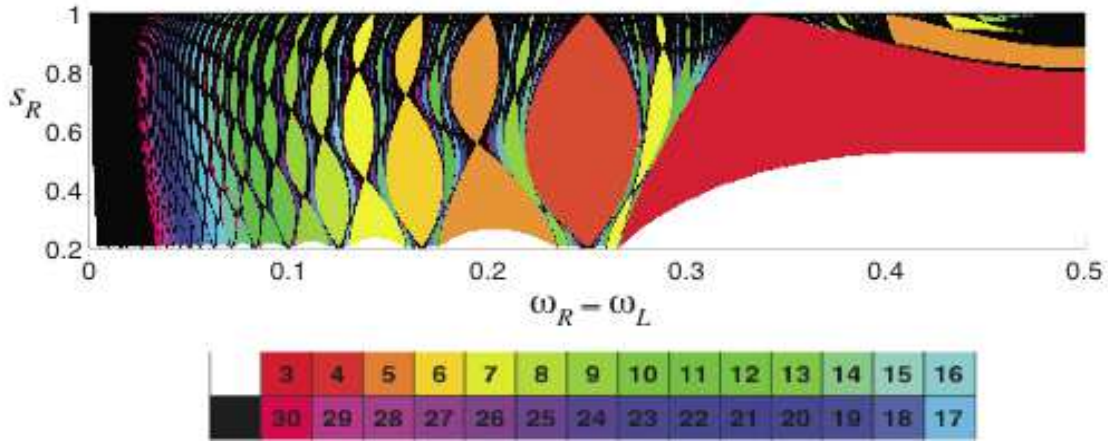}
%\setlength{\unitlength}{1cm}
%\begin{picture}(15,6)
%\put(0,0){\includegraphics[width=15cm,height=6cm]{rrEqrL_2}}
%\end{picture}
\caption{
Resonance tongues of (\ref{eq:DNSex}) with (\ref{eq:nonlinearity0})
for $r_\sL = 0.2$ and $\mu = 1$.
Each colored region corresponds to the existence of a periodic solution
with period indicated in the color bar.
The \rot~in each \tong~is equal to the value of
$\omega_\sR$ at which it emanates from the line, $s_\sR = 1$.
White regions correspond to no observed attractor;
black regions correspond to the existence of a bounded forward orbit
that is either aperiodic or has period larger than 30.
Similar figures at different
parameter values are given in \cite{SiMe08b,Si10}.
\label{fig:rrEqrL_2}
}
\end{center}
\end{figure}
%%%%%%%%%%%%%%%%%%%%%%%%%%%%%%%%%%%%%%%%%%%%%%%%%%%%%%%%%%%%%

The majority of the \tong s in Fig.~\ref{fig:rrEqrL_2}
exhibit a structure that is
often likened to a string of sausages.
This structure
was first observed in a one-dimensional sawtooth map \cite{YaHa87},
and has since been described in higher dimensional maps such as (\ref{eq:DNSex}),
see for example \cite{ZhMo06b,ZhMo08b,ZhSo03,PuSu06,SuGa04}.
As in \cite{YaHa87}, we refer to points where \tong s have zero width as
{\em \shr s}\footnote
{In this paper we will only consider the case of ``non-terminating" 
shrinking points defined in \cite{SiMe09}.
Thus we do not consider the ends of \tong s
such as those at $s_R = 1$ in Fig.~\ref{fig:rrEqrL_2}.}.

An unfolding of \shr s in \pwl, continuous maps of arbitrary dimension
was performed in \cite{SiMe09}.
There it was shown, upon imposing reasonable nondegeneracy assumptions,
that any two-dimensional slice of parameter space in the neighborhood of a
\shr~will resemble Fig.~\ref{fig:shrPoint_2_7}:
\shr s are codimension-two phenomena of \pwl, continuous maps.
In particular, near a \shr, the resonance 
tongue is locally a two-dimensional cone
bounded by four curves. These boundary curves are pairwise tangent
at the \shr~so the cone boundaries are locally $C^1$. 
In the interior of the cone  a {\em primary $n$-cycle} exists; it has some number of
points, say $l$, located, ``left'', of the \sw~($l=2$ for Fig.~\ref{fig:shrPoint_2_7}).
This orbit collides and annihilates with another $n$-cycle
in a \nsf~bifurcation on the four boundary curves of the cone.
This secondary periodic solution has $l-1$ points to the left of the \sw~within one
half of the cone and $l+1$ points to the left of the \sw~within the other half of the cone.
On each of the four boundary curves a different point of the primary orbit
lies on the \sw.

%%%%%%%%%%%%%%%%%%%%%%%%%%%%%%%%%%%%%%%%%%%%%%%%%%%%%%%%%%%%%
\begin{figure}[ht]
\begin{center}
\includegraphics[width=8.4cm,height=7cm]{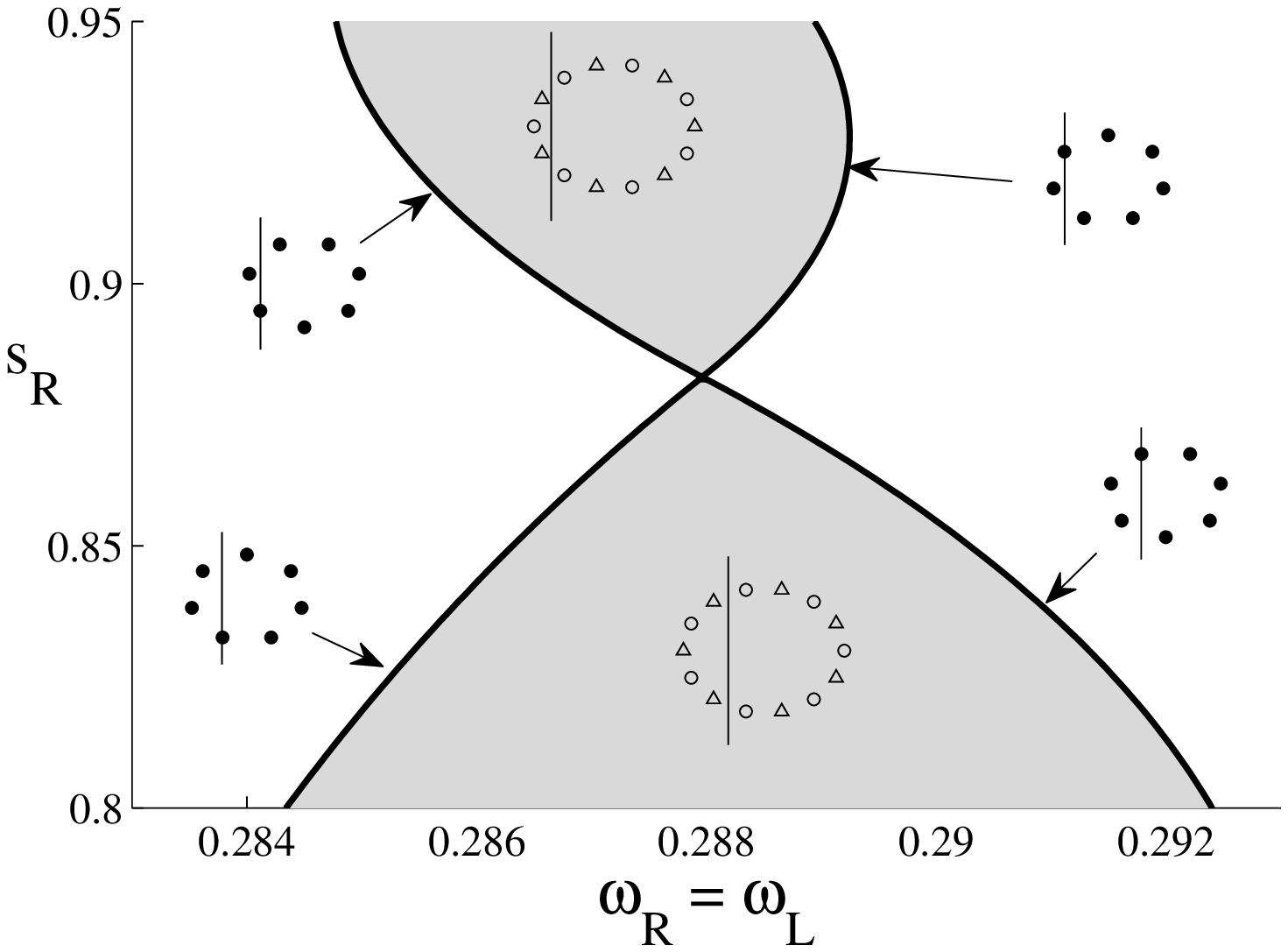}
%\setlength{\unitlength}{1cm}
%\begin{picture}(8.4,7)
%\put(0,0){\includegraphics[width=8.4cm,height=7cm]{shrPoint_2_7}}
%\end{picture}
\caption{
A magnification of the $2/7$ \tong~in Fig.~\ref{fig:rrEqrL_2} near a \shr.
Orbits that exist in the \tong~are shown in inserted phase portraits
(not to scale) as the open circles and triangles.
Above the \shr, these have $l=1$ and $l=2$
($l$ represents the number of points that lie left of the \sw)
and below the \shr, $l=2$ and $l=3$.
On the four boundary curves the two coexisting periodic solutions
collide and annihilate in \nsf~bifurcations.
On each of the four boundary curves a different point of the primary orbit
(with $l=2$), lies on the \sw.
These orbits are sketched in phase portraits with filled circles.
%Triangles [circles] points correspond to a stable [unstable] solution.
Compare this to panel B of Fig.~\ref{fig:ex_2_7}, an actual phase portrait at
$(\omega_R,s_R) = (0.287,0.95)$.
\label{fig:shrPoint_2_7}
}
\end{center}
\end{figure}
%%%%%%%%%%%%%%%%%%%%%%%%%%%%%%%%%%%%%%%%%%%%%%%%%%%%%%%%%%%%%

These results generalize to the nonlinear case in the follwing sense: 
regardless of the nonlinearites $g^\sL$ and $g^\sR$, the
two-dimensional bifurcation structure shown in Fig.~\ref{fig:rrEqrL_2}
will be essentially unchanged when $\mu$ is small enough and it will limit to the 
linear picture as $\mu \to 0^+$.
However, when nonlinear terms are present
the sausage structure is not preserved as $\mu$ increases; indeed,
the shrinking points break apart as shown in Fig.~\ref{fig:manyBreak}.
Consequently when nonlinear terms are present the phenomenon is codimension-three;
we refer this scenario as a {\em generalized \shr}.

%%%%%%%%%%%%%%%%%%%%%%%%%%%%%%%%%%%%%%%%%%%%%%%%%%%%%%%%%%%%%
\begin{figure}[ht]
\begin{center}
\includegraphics[width=13cm,height=10.6cm]{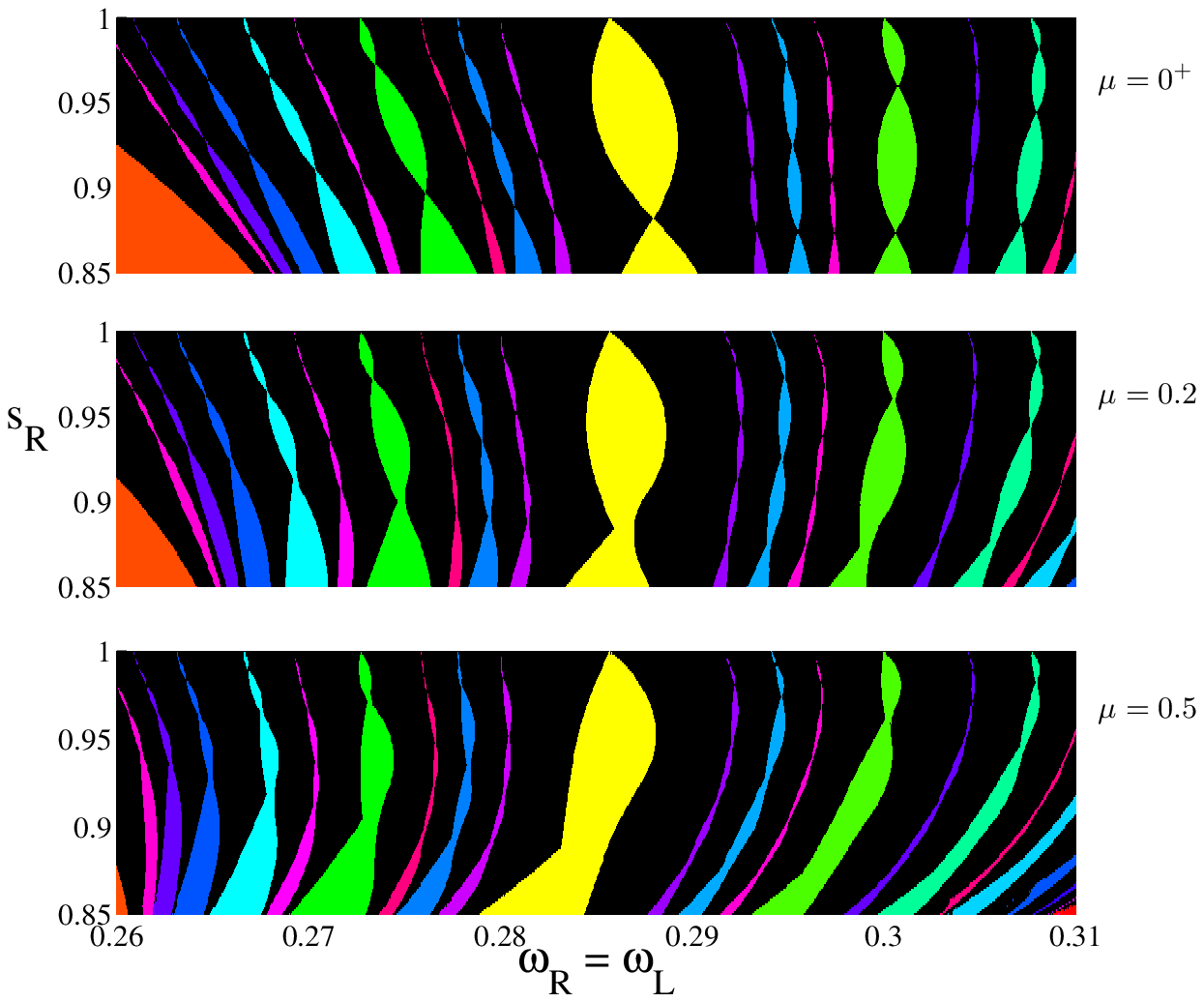}
%\setlength{\unitlength}{1cm}
%\begin{picture}(13,10.6)
%\put(0,7.3){\includegraphics[width=13cm,height=3.3cm]{manyBreak0}}
%\put(0,4){\includegraphics[width=13cm,height=3.3cm]{manyBreak2}}
%\put(0,0){\includegraphics[width=13cm,height=4cm]{manyBreak5}}
%\put(12,9.6){\small $\mu = 0^+$}
%\put(12,6.3){\small $\mu = 0.2$}
%\put(12,3){\small $\mu = 0.5$}
%\end{picture}
\caption{
Resonance tongues of (\ref{eq:DNSex}) with (\ref{eq:nonlinearity1})
when $r_\sL = 0.2$ for three different values of $\mu$.
The uppermost plot is a magnification of Fig.~\ref{fig:rrEqrL_2}.
%and includes the resonance tongue of Fig.~\ref{fig:shrPoint_2_7}.
The lower two plots were computed numerically
by estimating the eventual period of the forward orbit of the origin.
\label{fig:manyBreak}
}
\end{center}
\end{figure}
%%%%%%%%%%%%%%%%%%%%%%%%%%%%%%%%%%%%%%%%%%%%%%%%%%%%%%%%%%%%%

Fig.~\ref{fig:manyBreak} suggests that the break-up of
\shr s occurs in a regular fashion.
The \tong s develop a nonzero width that increases with $\mu$.
For $\mu > 0$ the right boundary of each \tong~appears to be smooth
whereas each left boundary retains the kink that appeared at the \shr.
It is interesting that, in contrast to the case for smooth maps,
we observe no ``strong" resonance behavior when $n < 5$.

The purpose of this paper is to determine the generic unfolding
of generalized \shr s for a \pws, continuous map of arbitrary dimension.
Here we summarize our results.
Suppose that a \bcb~occurs when a parameter $\mu$ is zero, that
resonance arises for $\mu > 0$, and that in the limit $\mu \to 0^+$,
a two-parameter bifurcation diagram exhibits a \shr~with its four boundary curves.
%The \shr~lies at the intersection of four curves
%(described in \cite{SiMe09}) that persist for small $\mu > 0$, see Fig.~\ref{fig:breakup}.
%On each curve a different point of a primary $n$-cycle lies
%on the \sw.
For small $\mu > 0$ the four boundary curves maintain
a common intersection point, $O$,
but each curve is no longer necessarily
tangent to the opposite curve at this point.

We will show that, under reasonable nondegeneracy assumptions, 
there exists a new bifurcation locus for each small enough fixed $\mu > 0$.
This locus is a curve of saddle-node bifurcations of the primary $n$-cycle
that is tangent to one boundary curve at a point $A$, and to an
adjacent boundary curve on the other side 
of the \shr~at $B$, see Fig.~\ref{fig:breakup}.
This locus is smooth and collapses to
the \shr~as $\mu \to 0^+$.
Two $n$-cycles that have the same itinerary as the
primary $n$-cycle exist
in a triangular region bounded by $O$, $A$ and $B$.
If we let $\theta_1$ denote the angle made at $O$
between the two border-collision curves across the region
$AOB$, see Fig.~\ref{fig:breakup},
and $\theta_2$ is the opposing angle made at $O$ between the
other two border-collision curves,
then for small $\mu > 0$, $\theta_1 < \theta_2$.

A formal statement that includes these results is given in
\Ther~\ref{th:unfold} in \S\ref{sec:UNFOLD}.

%%%%%%%%%%%%%%%%%%%%%%%%%%%%%%%%%%%%%%%%%%%%%%%%%%%%%%%%%%%%%
\begin{figure}[ht]
\begin{center}
\includegraphics[width=15.4cm,height=4.2cm]{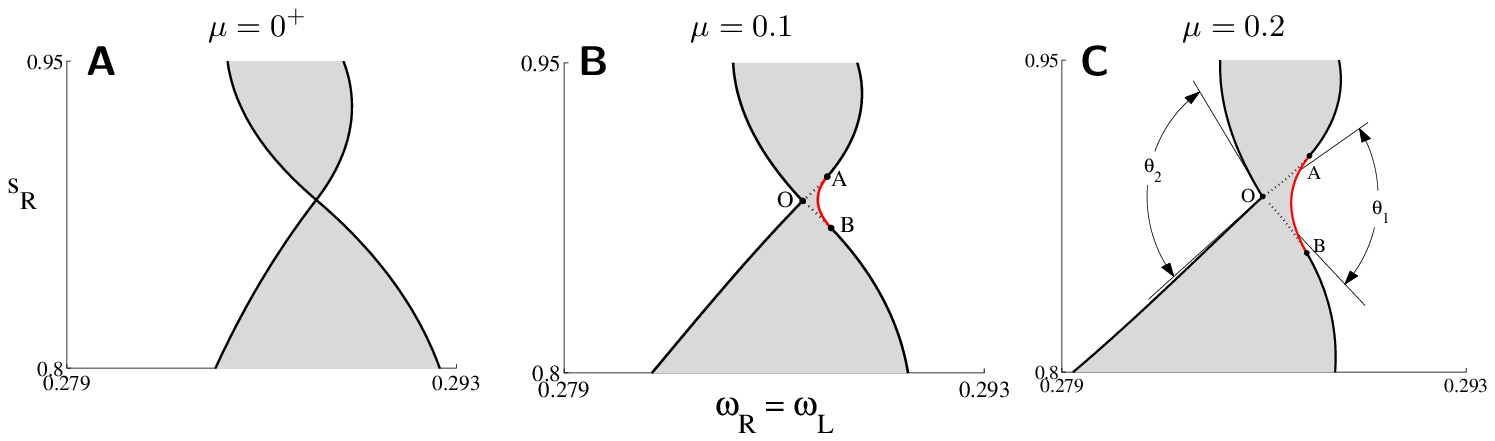}
%\setlength{\unitlength}{1cm}
%\begin{picture}(15.4,4.2)
%\put(0,.3){\includegraphics[width=5.1cm]{breakup1}}
%\put(5,0){\includegraphics[width=5.5cm]{breakup2}}
%\put(10.4,.43){\includegraphics[width=4.7cm]{breakup3}}
%\put(.86,3.7){\large \sf \bfseries A}
%\put(5.86,3.7){\large \sf \bfseries B}
%\put(10.96,3.7){\large \sf \bfseries C}
%\put(2.1,4.1){\small $\mu = 0^+$}
%\put(7,4.1){\small $\mu = 0.1$}
%\put(12,4.1){\small $\mu = 0.2$}
%\end{picture}
\caption{
Bifurcation sets of (\ref{eq:DNSex}) with (\ref{eq:nonlinearity1})
for $r_\sL = 0.2$ and three different $\mu$ values.
Panel A is identical to Fig.~\ref{fig:shrPoint_2_7}.
In panels B and C the locus of classical saddle-node bifurcations
of the primary orbit is indicated by a red curve.
Solid [dotted] curves correspond to \nsf~bifurcations [border-collision persistence].
\label{fig:breakup}
}
\end{center}
\end{figure}
%%%%%%%%%%%%%%%%%%%%%%%%%%%%%%%%%%%%%%%%%%%%%%%%%%%%%%%%%%%%%

Here we summarize the remainder of the paper.
In the following section we present the $N$-dimensional map (\ref{eq:pwsMap})
that describes an arbitrary \bcb.
Concepts from symbolic dynamics that are
invaluable for describing periodic solutions of (\ref{eq:pwsMap}) are introduced
in \S\ref{sec:SYMDYNS}.
Key formulas for periodic solutions are obtained in \S\ref{sec:PERSOLNS}.
Subsequently we impose the assumption
that (\ref{eq:pwsMap}) is piecewise-$C^K$; this 
allows us to derive useful series expansions.
Section \ref{sec:GSP} is devoted to defining
generalized shrinking points as a codimension-three scenario.
Finally in \S\ref{sec:UNFOLD} we unfold these points
leading to \Ther~\ref{th:unfold}.
The proofs of the theorem and \Lemm~\ref{le:expansions} are given in an appendix.

Throughout the paper we use $O(k)$ [$o(k)$] to denote terms that are
order $k$ or larger [larger than order $k$] in all variables
and parameters of a given expression.

%%%%%%%%%%%%%%%%%%
\section{Generic border-collision bifurcations}
\label{sec:BCB}

We restrict our attention to local dynamics of \bcb s, and
thus study a \pws~series expansion of (\ref{eq:genMap})
about an arbitrary \bcb.
Throughout this paper we will assume that  there is a single, smooth \sw~
in a neighborhood of the \bcb.
This is typically the case in models since
\sw s are usually defined by some simple physical constraint.
For simplicity we assume that a coordinate transformation has been made
so that the \sw~corresponds to the vanishing of the first component of $x$ 
%the coordinate plane, $e_1^{\sf T} x = 0$,
%where $x \in \mathbb{R}^N$
(see in particular \cite{DiBu01} for details of such a transformation)
and---to avoid the use of subscripts for components---we introduce the new variable
\begin{equation}
s = e_1^{\sf T} x \;.
\label{eq:sDef}
\end{equation}
A general \pws~map then takes the form
\begin{equation}
x_{i+1} = f(x_i;\xi) = \left\{ \begin{array}{lc}
f^\sL(x_i;\xi) , & s_i \le 0 \\
f^\sR(x_i;\xi) , & s_i \ge 0
\end{array} \right. \;,
\label{eq:pwsMap}
\end{equation}
where $\xi$ is a vector of parameters.
The switching manifold partitions phase space into two regions:
the {\em left half-space} (where $s < 0$)
and the {\em right half-space} (where $s > 0$).

We assume a \bcb~of a fixed point occurs at the origin
when a parameter $\mu$ is zero so that the functions $f^{\sL}$ and $f^{\sR}$
have series expansions
\begin{equation}
f^\cT(x_i;\xi) = \mu b(\xi) + A_\cT(\xi) x_i + g^\cT(x_i;\xi) \;,
\label{eq:fiForm}
\end{equation}
where $\cT = \sL, \sR$,
$\mu$ is the first component the parameters $\xi$,
$A_\cT$ is an $N \times N$ matrix,
$b \in \mathbb{R}^N$. 
The functions $g^\cT$ contain only terms that are nonlinear in $x$;
that is, $g^\cT(x_i;\xi) = o(|x_i|)$ (this, for example, could include terms
of order $|x_i|^{\frac{3}{2}}$ that arise naturally 
in Poincar\'{e} maps relating to sliding bifurcations).
By continuity of (\ref{eq:pwsMap}),
$A_\sL$ and $A_\sR$ are identical in their last $N-1$ columns
and $g^\sL = g^\sR$ whenever $s = 0$.

If $A_\cT(\xi)$ does not have an eigenvalue 1,
the half-map $f^\cT$ has a unique fixed point near the origin
given explicitly by
\begin{equation}
x^{*\cT}(\xi) = (I - A_\cT(\xi))^{-1} b(\xi) \Big|_{\mu = 0} \mu + o(\mu) \;.
\label{eq:xStarT}
\end{equation}
As seems to have been first noted by Feigin, see \cite{DiFe99},
a convenient expression for the first component of the vector, $x^{*\cT}$,
is obtained with adjugate matrices \cite{Be92,Ko96}:
\begin{equation}
{\rm adj}(X) X = \det(X) I \;,~~{\rm for~any~} N \times N {\rm ~matrix~} X.
\label{eq:adjRelation}
\end{equation}
The point is that since $A_\sL$ and $A_\sR$ are identical in their last $N-1$ columns,
${\rm adj}(I - A_\sL)$ and ${\rm adj}(I - A_\sR)$ share the same first row:
\begin{equation}
\varrho^{\sf T}(\xi) = e_1^{\sf T}{\rm adj}(I - A_\sL(\xi))
= e_1^{\sf T}{\rm adj}(I - A_\sR(\xi)) \;.
\label{eq:varrho}
\end{equation}
Consequently, multiplication of (\ref{eq:xStarT}) by $e_1^{\sf T}$ on the left implies
that the first component of the fixed point $x^{*\cT}$ satisfies the useful formula
\begin{equation}
s^{*\cT}(\xi) = \frac{\varrho^{\sf T}(\xi) b(\xi)}{\det(I - A_\cT(\xi))}
\Bigg|_{\mu = 0} \mu + o(\mu) \;.
\label{eq:sStarj}
\end{equation}
In particular we learn from (\ref{eq:sStarj})
that both fixed points, if they exist,
move away from the \sw~linearly with respect to $\mu$ if and only if
$\varrho^{\sf T}(\xi) b(\xi) |_{\mu = 0} \ne 0$.
This condition is a nondegeneracy condition for the \bcb.
Under this assumption, the bifurcation is a border-collision fold bifurcation
if $\det(I-A_\sL(0))$ and $\det(I-A_\sR(0))$ have opposite signs,
and border-collision persistence if they have the same sign \cite{DiFe99}.

%%%%%%%%%%%%%%%%%%%%%%%%%%
\section{Symbolic dynamics}
\label{sec:SYMDYNS}

It is common in the study of \pws~systems for symbolic methods to be
highly beneficial.
In this paper we consider bi-infinite
sequences, $\cS$, constructed from the binary alphabet
$\{ \sL, \sR \}$.
In order to unfold \shr s,
we find it necessary to consider only what we have termed
{\em rotational \sew s}.
In \cite{SiMe09} we defined these as particular finite collections.
Instead of repeating this definition we provide
here a definition that is more versatile
in that it extends naturally to nonperiodic sequences
and has been described elsewhere.
Furthermore, to be consistent with combinatorics literature
 sequences are always assumed to
contain infinitely many elements (unlike in \cite{SiMe09}).

\begin{definition}
For $\alpha, \beta \in [0,1)$,
let $\cS[\alpha,\beta]$, be the bi-infinite \sew~with $i^{th}$ element
\begin{equation}
\cS[\alpha,\beta]_i \equiv \left\{ \begin{array}{lc}
\sL, & i \alpha {\rm ~mod~} 1 \in [0,\beta) \\
\sR, & i \alpha {\rm ~mod~} 1 \in [\beta,1)
\end{array} \right. \;, {\rm ~for~all~} i \in \mathbb{Z}.
\nonumber
\end{equation}
\label{def:abseq}
\end{definition}

%%%%%%%%%%%%%%%%%%%%%%%%%%%%%%%%%%%%%%%%%%%%%%%%%%%%%%%%%%%%%
\begin{figure}[ht]
\begin{center}
\includegraphics[width=8cm,height=7cm]{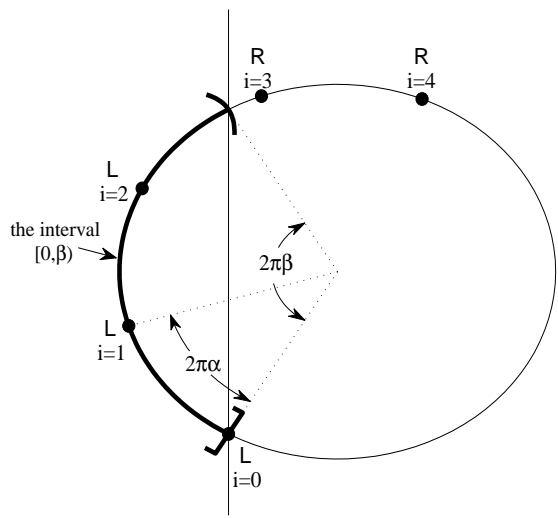}
%\setlength{\unitlength}{1cm}
%\begin{picture}(8.4,7)
%\put(0,0){\includegraphics[width=8cm,height=7cm]{genRSS}}
%\end{picture}
\caption{
A geometric portrayal of \Defi~\ref{def:abseq}.
First cut a circle with a vertical line that subtends
an angle $2\pi\beta$ as shown. 
Each real number, $\phi$, is then represented by a point $2 \pi \phi$ radians clockwise from the
lower intersection of the circle with the vertical line.
Then $\cS_i[\alpha,\beta] = \sL$ whenever
$\phi = {\rm i} \alpha$ is located to the left of the
vertical line, 
%or at the lower intersection point,
and $\cS_i[\alpha,\beta] = \sR$ otherwise.
In addition $\cS_0[\alpha,\beta]$ is always $\sL$.
\label{fig:genRSS}
}
\end{center}
\end{figure}
%%%%%%%%%%%%%%%%%%%%%%%%%%%%%%%%%%%%%%%%%%%%%%%%%%%%%%%%%%%%%

A visual representation of these sequences is provided
by Fig.~\ref{fig:genRSS}.
These sequences seem to have first been studied by Slater \cite{Sl50,Sl67}.
The sequences for which $\alpha = \beta \notin \mathbb{Q}$
have been well studied and
are known as {\em rotation sequences};
they are equivalent to {\em Sturmian sequences} \cite{Fo02},
which are traditionally defined from a combinatorics viewpoint \cite{MoHe40}.
Some discussion of the case $\alpha = \beta \in \mathbb{Q}$
is given in \cite{HaZh98}.
A related approach is to consider arithmetic sequences on $\mathbb{Z}/n\mathbb{Z}$
for some $n \in \mathbb{N}$ \cite{SiTr92}.
Similar sequences for rotational orbits of the H\'enon map are described in \cite{DuMe05}.

\begin{definition}[Rotational Symbol Sequence]
For each $l,m,n \in \mathbb{N}$ with $l,m < n$ and ${\rm gcd}(m,n) = 1$,
$\cS[\frac{m}{n},\frac{l}{n}]$ is a {\em rotational \sew}.
\label{def:rss}
\end{definition}

Any periodic sequence, such as $\cS[\frac{m}{n},\frac{l}{n}]$,
is generated by repeated copies of a finite collection
from $\{ \sL, \sR \}$, termed a {\em word}.
For instance for the word $\cW = \sL \sL \sR$,
the generated sequence is
$\cS = \ldots \sL \sL \sR \sL \sL \sR \sL \sL \sR \dots$.
Here we let $\cW[l,m,n]$ denote the word comprised of
the $i = 0,1,\ldots,n-1$ elements of $\cS[\frac{m}{n},\frac{l}{n}]$.

As an example, let us compute $\cS[\frac{2}{7},\frac{3}{7}]$.
Here $\alpha = \frac{2}{7}$,
thus the numbers $i \alpha {\rm ~mod~} 1$ of \Defi~\ref{def:abseq}
for $i = 0,1,\ldots,n-1$ are
$0, \frac{2}{7}, \frac{4}{7}, \frac{6}{7}, \frac{1}{7},
\frac{3}{7}, \frac{5}{7}$,
and $\beta = \frac{3}{7}$, therefore
$\cW[3,2,7] = \sL \sL \sR \sR \sL \sR \sR$.

Throughout this paper will we use the symbol $d$ to denote the
multiplicative inverse of $m$ modulo $n$,
for example $d=4$ when $m=2$ and $n=7$ as above.
Then
\begin{equation}
\cW[l,m,n]_{id} = \left\{ \begin{array}{lc}
\sL ,& i = 0,\ldots,l-1 \\
\sR ,& i = l,\ldots,n-1
\end{array} \right. \;,
\label{eq:rss}
\end{equation}
where $id$ is taken modulo $n$.
For clarity, throughout this paper we omit ``${\rm mod \,} n$'' where
it is clear modulo arithmetic is being used.

Given a word $\cW$,
we let $\cW^{(i)}$ denote the $i^{\rm th}$ left cyclic permutation of $\cW$
and $\cW^{\overline{i}}$ denote the word that differs from $\cW$
in only the $i^{\rm th}$ element.
For example if
$\cW = \sL \sR \sL \sR \sR$ then
$\cW^{\overline{3}} = \sL \sR \sL \sL \sR$ and
$\cW^{(2)} = \sL \sR \sR \sL \sR$.
If $\cS$ is the sequence generated by $\cW$
we let $\cS^{(i)}$ and $\cS^{\overline{i}}$ denote
the sequences generated by $\cW^{(i)}$ and $\cW^{\overline{i}}$,
respectively.

A key property of \sew s that is verified in \cite{SiMe09} is
\begin{equation}
\cS[{\textstyle \frac{m}{n},\frac{l}{n}}]^{((l-1)d) \overline{0}} =
\cS[{\textstyle \frac{m}{n},\frac{l}{n}}]^{\overline{0} (ld)} \;.
\label{eq:switchS}
\end{equation}

%A word is called {\em primitive} if it cannot be written
%as a power, $\cW^k$, for any $k > 1$.
%$\cW$ is primitive if and only if $\cW \ne \cW^{(i)}$ for all $i \ne 0$.

%%%%%%%%%%%%%%%%%%%%%%%%%%
\section{Periodic solutions}
\label{sec:PERSOLNS}

For any symbol sequence $\cS$, the
iterates of a point $x_0 \in \mathbb{R}^N$
under the two half-maps of (\ref{eq:pwsMap})
in the order determined by $\cS$ are
\begin{equation}
x_{i+1} = f^{(\cS_i)}(x_i;\xi)
\label{eq:altMap}
\end{equation}
In general this may be different from iterating $x_0$ under the map (\ref{eq:pwsMap}).
However, if the sequence $\{ x_i \}$
satisfies the {\em admissibility condition}:
\begin{equation}
\cS_i = \left\{
\begin{array}{lc}
\sL, & {\rm whenever~} s_i < 0 \\
\sR, & {\rm whenever~} s_i > 0
\end{array}
\right.
\label{eq:admissCond}
\end{equation}
for every $i$, then $\{ x_i \}$
coincides with the orbit of $x_0$ under (\ref{eq:pwsMap}).
Notice if $s_i = 0$ there is no restriction on $\cS_i$.
When (\ref{eq:admissCond}) holds for every $i$,
$\{ x_i \}$ is admissible,
otherwise it is virtual.

When $\cS$ has period $n$, the periodic orbits with this sequence
are admissible fixed points of the map
\begin{equation}
f^\cS = f^{\cS_{n-1}} \circ \cdots \circ f^{\cS_0} \;.
\label{eq:fSdef}
\end{equation}
Some straightforward algebra leads to
\begin{equation}
f^\cS(x;\xi) = P_\cS(\xi) b(\xi) \Big|_{\mu = 0} \mu + o(\mu)
+ \left( M_\cS(\xi) \Big|_{\mu = 0} + o(\mu^0) \right) x + o(x) \;,
\label{eq:fSexpansion}
\end{equation}
where
\begin{eqnarray}
M_\cS & = & A_{\cS_{n-1}} \ldots A_{\cS_0} \;, \label{eq:stabMatrix} \\
P_\cS & = &
I + A_{\cS_{n-1}} + A_{\cS_{n-1}} A_{\cS_{n-2}} + \cdots
+ A_{\cS_{n-1}} \ldots A_{\cS_1}  \;. \label{eq:bcMatrix}
\end{eqnarray}
%As in \cite{SiMe09} we refer to $M_\cS$ as the {\em stability matrix} of $\cS$
%and $P_\cS$ as the {\em border-collision matrix} of $\cS$.
Notice $P_\cS$ is independent of $\cS_0$, thus
\begin{equation}
P_\cS = P_{\cS^{\overline{0}}} \;.
\label{eq:PindepS0}
\end{equation}

We now present six lemmas relating to periodic solutions
that are useful for analyzing \shr s.
A consequence of the following lemma is that
$M_\cS$ and $M_{\cS^{\overline{0}}}$
are identical in their last $N-1$ columns.

\begin{lemma}
For any $N \times N$ matrix, $X$,
$X A_\sR = X A_\sL + \zeta e_1^{\sf T}$, for some $\zeta \in \mathbb{R}^N$.
\label{le:firstColumnEqual}
\end{lemma}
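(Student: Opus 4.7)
The plan is to reduce the claim to the continuity condition on $f$ at the switching manifold, which as noted just after (\ref{eq:fiForm}) forces $A_\sL$ and $A_\sR$ to coincide in their last $N-1$ columns. Once this is in hand, the identity for $X A_\sR - X A_\sL$ follows from a single matrix multiplication.

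First I would argue, directly from the definition of continuity applied to (\ref{eq:pwsMap})--(\ref{eq:fiForm}), that $(A_\sR - A_\sL) x = 0$ for every $x$ satisfying $e_1^{\sf T} x = 0$. Indeed, evaluating $f^\sL(x;\xi) = f^\sR(x;\xi)$ on the switching manifold and using $g^\sL = g^\sR$ there leaves $(A_\sL - A_\sR) x = 0$ on an $(N-1)$-dimensional subspace (and by scaling, on a neighborhood thereof). Equivalently, the last $N-1$ columns of $A_\sR$ and $A_\sL$ agree, since these columns are $A_\sR e_j$ and $A_\sL e_j$ for $j = 2, \dots, N$, and each $e_j$ lies in the kernel of $e_1^{\sf T}$.

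Next I would rewrite this kernel condition as a rank-one factorization: since $A_\sR - A_\sL$ annihilates the hyperplane $\{x : e_1^{\sf T} x = 0\}$, there is a vector $v \in \mathbb{R}^N$ (namely the first column of $A_\sR - A_\sL$) such that
\begin{equation}
A_\sR - A_\sL = v\, e_1^{\sf T}. \nonumber
\end{equation}
Left-multiplying by $X$ then gives
\begin{equation}
X A_\sR - X A_\sL = X(A_\sR - A_\sL) = X v\, e_1^{\sf T}, \nonumber
\end{equation}
so setting $\zeta = X v \in \mathbb{R}^N$ yields the desired identity.

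There is no substantive obstacle here; the lemma is essentially a restatement of continuity across the switching manifold in a form convenient for the symbolic computations of $M_\cS$ and $P_\cS$ that follow. The only point worth being explicit about is that, once $A_\sR - A_\sL$ has been identified as rank one with row-factor $e_1^{\sf T}$, the vector $\zeta$ depends linearly on $X$, which will be used repeatedly in subsequent arguments relating $M_\cS$ to $M_{\cS^{\overline{0}}}$.
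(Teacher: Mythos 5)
Your argument is correct and follows essentially the same route as the paper: both write $A_\sR - A_\sL = \hat{\zeta}\,e_1^{\sf T}$ (a rank-one matrix supported on the first column, as forced by continuity on the switching manifold) and then left-multiply by $X$ to obtain $\zeta = X\hat{\zeta}$. You simply spell out in more detail why continuity forces the last $N-1$ columns of $A_\sL$ and $A_\sR$ to agree, a fact the paper takes as already established.
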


\begin{proof}
Since $A_\sL$ and $A_\sR$ are identical in all but possibly their first columns,
we may write $A_\sR = A_\sL + \hat{\zeta} e_1^{\sf T}$ for some $\hat{\zeta} \in \mathbb{R}^N$.
This proves the result with $\zeta = X \hat{\zeta}$.
\hfill
\end{proof}

If $x_0$ is a fixed point of $f^\cS$ then the $n$ points
$\{ x_0,\ldots,x_{n-1} \}$
(where $x_i$ is defined by (\ref{eq:altMap})) describe a periodic solution
(which may be virtual)
that we will refer to as an {\em $\cS$-cycle}.
The following two lemmas are generalizations of those given in \cite{SiMe09}.

\begin{lemma}
Suppose $\{ x_i \}$ is an $\cS$-cycle and $x_j$ lies on the \sw, for some $j$.
Then $\{ x_i \}$ is also an $\cS^{\overline{j}}$-cycle.
\label{le:solvesAlso}
\end{lemma}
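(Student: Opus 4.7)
The plan is to exploit continuity of $f$ at the switching manifold in a single step, and then unwind the definitions.

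First I would recall the setup: $\cS^{\overline{j}}$ differs from $\cS$ only in position $j$, and the points of an $\cS$-cycle are generated by $x_{i+1} = f^{(\cS_i)}(x_i;\xi)$ under the composition (\ref{eq:fSdef}). If I instead iterate using $\cS^{\overline{j}}$ starting from the same $x_0$, I produce a new sequence $\{x'_i\}$ that agrees with $\{x_i\}$ for $i = 0,1,\ldots,j$ (because $\cS_i = \cS^{\overline{j}}_i$ for $i < j$), so in particular $x'_j = x_j$ lies on the \sw.

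The key step is the evaluation at index $j$. Because $s_j = 0$, continuity of $f$ at the \sw~gives $f^\sL(x_j;\xi) = f^\sR(x_j;\xi)$, so that
\begin{equation}
x'_{j+1} = f^{(\cS^{\overline{j}}_j)}(x_j;\xi) = f^{(\cS_j)}(x_j;\xi) = x_{j+1} \;. \nonumber
\end{equation}
From index $j+1$ onward the two symbol sequences again agree ($\cS_i = \cS^{\overline{j}}_i$ for $i > j$), so $x'_i = x_i$ for all $i = j+1,\ldots,n-1$, and hence $f^{\cS^{\overline{j}}}(x_0;\xi) = x'_n = x_n = x_0$. Thus $x_0$ is a fixed point of $f^{\cS^{\overline{j}}}$ and the orbit it generates under $\cS^{\overline{j}}$ is precisely $\{x_i\}$, i.e., $\{x_i\}$ is an $\cS^{\overline{j}}$-cycle.

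There isn't really a hard part; the content is essentially the observation that on the \sw~the two half-maps coincide, which is built into the continuity hypothesis on (\ref{eq:pwsMap}). The only subtlety worth flagging is that the statement does not assert admissibility of the new $\cS^{\overline{j}}$-cycle (the definition of $\cS$-cycle used in the lemma allows virtual cycles, since admissibility puts restrictions on $\cS_i$ only when $s_i \ne 0$, cf.~(\ref{eq:admissCond})), so no additional argument about signs of the $s_i$ is needed.
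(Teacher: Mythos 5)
Your proof is correct and uses exactly the same key observation as the paper's one-line argument: continuity of the map forces $f^\sL(x_j;\xi) = f^\sR(x_j;\xi)$ when $x_j$ is on the switching manifold, so flipping $\cS_j$ does not change the orbit. You simply spell out the bookkeeping that the paper leaves implicit, and your remark about admissibility not being at issue is a fair reading of the definition of an $\cS$-cycle.
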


\begin{proof}
By continuity:
$f^\sL(x_j;\xi) = f^\sR(x_j;\xi)$, hence there is no restriction on
the $j^{\rm th}$ element of $\cS$.
\hfill
\end{proof}

\begin{lemma}
Suppose $\{ x_i \}$ is an $\cS$-cycle.
Then for any $j$,
$\det \big( I - D_x f^{\cS^{(j)}}(x_j;\xi) \big) =
\det \big( I - D_x f^\cS(x_0;\xi) \big)$.
\label{le:cyclicDetDx}
\end{lemma}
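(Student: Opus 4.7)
The plan is to reduce the claim to the standard linear-algebra identity $\det(I - B_1 B_2) = \det(I - B_2 B_1)$, which holds for any pair of $N \times N$ matrices $B_1, B_2$ (since $B_1 B_2$ and $B_2 B_1$ have the same characteristic polynomial). The two determinants in the lemma will turn out to correspond to the two orderings of a single cyclic product, one started at index $0$ and the other started at index $j$.

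First I would set $D_i \equiv D_x f^{\cS_i}(x_i;\xi)$ for each $i = 0, 1, \ldots, n-1$, and apply the chain rule to (\ref{eq:fSdef}) to obtain
\begin{equation}
D_x f^\cS(x_0;\xi) = D_{n-1} D_{n-2} \cdots D_1 D_0. \nonumber
\end{equation}
Because $\cS^{(j)}$ is the $j$-th left cyclic permutation of $\cS$, the composition $f^{\cS^{(j)}}$ sends $x_j \mapsto x_{j+1} \mapsto \cdots \mapsto x_{j-1} \mapsto x_j$, so $x_j$ is a fixed point and the chain rule gives
\begin{equation}
D_x f^{\cS^{(j)}}(x_j;\xi) = D_{j-1} D_{j-2} \cdots D_0 D_{n-1} \cdots D_{j+1} D_j, \nonumber
\end{equation}
with all indices taken mod $n$. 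The two expressions are the same product of the $D_i$ in cyclic order, broken at two different points.

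Next I would introduce
\begin{equation}
B_2 \equiv D_{n-1} D_{n-2} \cdots D_j, \qquad B_1 \equiv D_{j-1} D_{j-2} \cdots D_0, \nonumber
\end{equation}
so that $D_x f^\cS(x_0;\xi) = B_2 B_1$ while $D_x f^{\cS^{(j)}}(x_j;\xi) = B_1 B_2$. Applying the identity $\det(I - B_2 B_1) = \det(I - B_1 B_2)$ immediately yields the conclusion. The degenerate cases $j = 0$ and $j = n$ (where one of the factors is empty, i.e.\ equal to $I$) hold trivially.

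There is essentially no obstacle here beyond bookkeeping of the cyclic indices; the only subtlety worth remarking on is that the $D_i$ need not be invertible (since $A_\sL$ and $A_\sR$ may be singular once nonlinear corrections are included), but the determinant identity $\det(I - B_1 B_2) = \det(I - B_2 B_1)$ is valid without any invertibility hypothesis, so no additional assumption is required.
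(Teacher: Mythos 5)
Your proof is correct and takes essentially the same approach as the paper: both write the Jacobian via the chain rule as a product of the $D_i$, observe that the Jacobian of $f^{\cS^{(j)}}$ at $x_j$ is a cyclic permutation of that product, and invoke the invariance of the spectrum (equivalently, of $\det(I - \cdot)$) under cyclic reordering. The paper simply cites an earlier lemma for that last step, whereas you spell it out via $\det(I - B_1 B_2) = \det(I - B_2 B_1)$.
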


\begin{proof}
By the chain rule the Jacobian $D_x f^\cS(x_0;\xi)$
may be written as a product of matrices:
%\begin{equation}
%D_x f^\cS(x_0;\xi) =
%D_x f^{\cS_{n-1}}(x_{n-1};\xi)
%D_x f^{\cS_{n-2}}(x_{n-2};\xi) \cdots
%D_x f^{\cS_0}(x_0;\xi) \;, \nonumber
%\end{equation}
\begin{equation}
D_x f^\cS(x_0;\xi) =
\prod_{i=n-1}^0 D_x f^{\cS_i}(x_i;\xi) \;. \nonumber
\end{equation}
The spectrum of this product is unchanged if the $N$ matrices
are multiplied in an order that differs only cyclically from this one
(refer to the proof of \Lemm~4 of \cite{SiMe09}),
%e.g.
%\begin{equation}
%D_x f^{\cS_{j-1}}(x_{j-1};\xi)
%D_x f^{\cS_{j-2}}(x_{j-2};\xi) \cdots
%D_x f^{\cS_j}(x_j;\xi) =
%D_x f^{\cS^{(j)}}(x_j;\xi) \;. \nonumber
%\end{equation}
which proves the result.
\hfill
\end{proof}

Whenever $M_\cS(\xi)$ does not have an eigenvalue 1,
the implicit function theorem implies that $f^\cS$ has a unique
fixed point near the origin, call it $x^{*\cS}$.
Using (\ref{eq:fSexpansion}) we obtain
\begin{equation}
x^{*\cS}(\xi) = (I - M_\cS(\xi))^{-1} P_\cS(\xi) b(\xi) \Big|_{\mu = 0} \mu + o(\mu) \;.
\label{eq:xStarS}
\end{equation}
We may derive a formula for the first component of $x^{*\cS}$,
denoted $s^{*\cS}$,
in the same spirit as (\ref{eq:sStarj}) for fixed points of (\ref{eq:pwsMap}).
Let
\begin{equation}
\varrho_\cS^{\sf T}(\xi) = e_1^{\sf T} {\rm adj}(I-M_\cS(\xi)) \;,
\label{eq:varrhoS}
\end{equation}
then
\begin{equation}
s^{*\cS}(\xi) = \frac{\varrho_\cS^{\sf T}(\xi) P_\cS(\xi) b(\xi)}{\det(I-M_\cS(\xi))}
\Bigg|_{\mu = 0} \mu + o(\mu) \;.
\label{eq:sStarSalt}
\end{equation}
As in \cite{Si10} we use two lemmas to derive a convenient formula for $s^{*\cS}$.

\begin{lemma}
The matrices $P_\cS (I-A_{\cS_0})$
and $I-M_\cS$ can differ in only their first columns.
\label{le:ncssMatDiff}
\end{lemma}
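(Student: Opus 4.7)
My plan is to treat this as a purely algebraic identity. I would expand the difference $P_\cS(I - A_{\cS_0}) - (I - M_\cS)$ using the definitions (\ref{eq:stabMatrix}) and (\ref{eq:bcMatrix}), and reduce it to a sum of terms each of the form $X (A_{\cS_k} - A_{\cS_0})$ for some matrix $X$ and some index $k \in \{1,\ldots,n-1\}$. \Lemm~\ref{le:firstColumnEqual} then finishes the argument: since $A_\sR$ and $A_\sL$ agree in their last $N-1$ columns, each such summand vanishes in its last $N-1$ columns, so the whole sum does as well.

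Concretely, I would write $P_\cS(I - A_{\cS_0}) = P_\cS - P_\cS A_{\cS_0}$ and observe that the leading $I$ of $P_\cS$ cancels the $I$ in $I - M_\cS$, while the last term of $P_\cS A_{\cS_0}$ is precisely $A_{\cS_{n-1}} \cdots A_{\cS_1} A_{\cS_0} = M_\cS$ and cancels too. The $2(n-1)$ remaining terms pair off by matching the common prefix $A_{\cS_{n-1}} \cdots A_{\cS_{k+1}}$ and telescope into
\begin{equation*}
P_\cS(I - A_{\cS_0}) - (I - M_\cS) \;=\; \sum_{k=1}^{n-1} A_{\cS_{n-1}} A_{\cS_{n-2}} \cdots A_{\cS_{k+1}}\,(A_{\cS_k} - A_{\cS_0}),
\end{equation*}
with the empty product convention $I$ for $k = n-1$. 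Each factor $A_{\cS_k} - A_{\cS_0}$ either vanishes (when $\cS_k = \cS_0$) or equals $\pm(A_\sR - A_\sL)$; in either case \Lemm~\ref{le:firstColumnEqual}, applied with $X = A_{\cS_{n-1}} \cdots A_{\cS_{k+1}}$, forces that summand to be zero outside its first column.

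I anticipate no real obstacle, only the mild bookkeeping needed to verify the telescoping. The underlying reason is structural: continuity of $f$ forces $A_\sL$ and $A_\sR$ to agree in their last $N-1$ columns, and this rank-one difference propagates cleanly through any left-multiplication. The degenerate case $n = 1$ is trivial, since then $P_\cS = I$ and both sides equal $I - A_{\cS_0}$.
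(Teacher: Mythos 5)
Your proposal is correct and follows essentially the same route as the paper's proof: expand $P_\cS(I - A_{\cS_0})$, pair terms to produce the telescoping sum $\sum_{k=1}^{n-1} A_{\cS_{n-1}} \cdots A_{\cS_{k+1}}(A_{\cS_k} - A_{\cS_0})$, and then apply Lemma~\ref{le:firstColumnEqual} to each summand. The only cosmetic difference is that the paper writes each resulting summand in the form $\zeta_i e_1^{\sf T}$ rather than appealing directly to the sign of the difference $A_{\cS_k} - A_{\cS_0}$, but this is the identical argument.
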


\begin{proof}
Using (\ref{eq:stabMatrix}) and (\ref{eq:bcMatrix}),
\begin{eqnarray}
P_\cS (I-A_{\cS_0}) & = & (I + A_{\cS_{n-1}} + A_{\cS_{n-1}} A_{\cS_{n-2}} + \cdots
+ A_{\cS_{n-1}} \ldots A_{\cS_1})(I-A_{\cS_0}) \nonumber \\
& & \hspace{10mm} {\rm expand~and~group~terms~differently \hspace{-1mm}:} \nonumber \\
& = & I-M_\cS + (A_{\cS_{n-1}}-A_{\cS_0}) + A_{\cS_{n-1}}(A_{\cS_{n-2}}-A_{\cS_0}) + \cdots \nonumber \\
& & \hspace{60mm} +~A_{\cS_{n-1}} \ldots A_{\cS_2}(A_{\cS_1}-A_{\cS_0}) \nonumber \\
& & \hspace{10mm} {\rm apply~\Lemm~\ref{le:firstColumnEqual} \hspace{-1mm}:} \nonumber \\
& = & I-M_\cS + \zeta_{n-1} e_1^{\sf T} + \zeta_{n-2} e_1^{\sf T} + \cdots
+ \zeta_1 e_1^{\sf T} \nonumber \\
& & \hspace{10mm} {\rm where~each~\zeta_i \in \mathbb{R}^N,} \nonumber \\
& = & I-M_\cS + \left( \sum_{i=1}^{n-1} \zeta_i \right) e_1^{\sf T} \;. \nonumber
\end{eqnarray}
\end{proof}

\begin{lemma}
$\varrho_\cS^{\sf T} P_\cS = \det(P_\cS) \varrho^{\sf T}$,
where $\varrho_\cS^{\sf T}$ is given by (\ref{eq:varrhoS})
and $\varrho^{\sf T}$ is given by (\ref{eq:varrho}).
\label{le:generalizedsFormula}
\end{lemma}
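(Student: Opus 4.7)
The plan is to reduce the identity to a scalar relation by right-multiplying by $(I - A_{\cS_0})$, which combines the two key tools at hand: Lemma~\ref{le:ncssMatDiff} and the fundamental adjugate relation $(\ref{eq:adjRelation})$. First I would use Lemma~\ref{le:ncssMatDiff} to write $P_\cS(I - A_{\cS_0}) = (I - M_\cS) + v e_1^{\sf T}$ for some $v \in \mathbb{R}^N$. Right-multiplying the target identity by $(I - A_{\cS_0})$ and substituting, the left-hand side becomes
\begin{equation}
\varrho_\cS^{\sf T}\bigl((I - M_\cS) + v e_1^{\sf T}\bigr)
= \det(I - M_\cS)\,e_1^{\sf T} + (\varrho_\cS^{\sf T} v)\,e_1^{\sf T} \;,
\nonumber
\end{equation}
where I have used $e_1^{\sf T}{\rm adj}(I - M_\cS)(I - M_\cS) = \det(I - M_\cS)\,e_1^{\sf T}$ from (\ref{eq:adjRelation}). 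The right-hand side becomes $\det(P_\cS)\,\varrho^{\sf T}(I - A_{\cS_0}) = \det(P_\cS)\det(I - A_{\cS_0})\,e_1^{\sf T}$, again by (\ref{eq:adjRelation}) together with the definition (\ref{eq:varrho}) of $\varrho^{\sf T}$.

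Both sides are now scalar multiples of $e_1^{\sf T}$, so equality after right-multiplication reduces to the scalar identity $\det(P_\cS)\det(I - A_{\cS_0}) = \det(I - M_\cS) + \varrho_\cS^{\sf T} v$. This is precisely the matrix determinant lemma ($\det(A + uw^{\sf T}) = \det(A) + w^{\sf T}{\rm adj}(A)u$) applied to $P_\cS(I - A_{\cS_0}) = (I - M_\cS) + v e_1^{\sf T}$, so the equality holds automatically.

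To upgrade from the equality of the products with $(I - A_{\cS_0})$ to the original identity, I would cancel $(I - A_{\cS_0})$ on the open dense set of parameters for which $I - A_{\cS_0}$ is invertible; there $e_1^{\sf T}(I - A_{\cS_0})^{-1} = \varrho^{\sf T}/\det(I - A_{\cS_0})$, which recovers $\varrho_\cS^{\sf T} P_\cS = \det(P_\cS)\,\varrho^{\sf T}$. Since both sides are polynomial in the entries of $A_\sL$ and $A_\sR$, equality on a dense open set extends to all parameters by continuity.

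The one step that requires care is this final cancellation: the natural manipulation in steps one and two uses $(I - A_{\cS_0})$ on the right, but the conclusion must be stated without assuming it is invertible. I expect this to be the main technical wrinkle, and I would handle it with the polynomial/continuity remark above rather than by inverting $I - A_{\cS_0}$ outright. Everything else is a routine application of adjugate identities of the sort already used to derive (\ref{eq:sStarj}).
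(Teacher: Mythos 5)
Your proof is correct, but it follows a genuinely different route from the paper's. The paper exploits the observation that since $P_\cS(I-A_{\cS_0})$ and $I-M_\cS$ differ only in their first columns (Lemma~\ref{le:ncssMatDiff}), the first rows of their adjugates coincide (the $(1,i)$ entries of ${\rm adj}(X)$ are cofactors that never involve the first column of $X$); it then applies the multiplicativity identity ${\rm adj}(XY)={\rm adj}(Y)\,{\rm adj}(X)$, the definition~(\ref{eq:varrho}), and finally the fundamental relation~(\ref{eq:adjRelation}). That chain needs only four lines and never touches invertibility of $I-A_{\cS_0}$ directly. You instead right-multiply the target by $(I-A_{\cS_0})$, deploy Lemma~\ref{le:ncssMatDiff} as a rank-one update, apply the matrix determinant lemma $\det(A+uw^{\sf T})=\det(A)+w^{\sf T}{\rm adj}(A)u$ to reduce everything to a scalar identity, and then cancel $(I-A_{\cS_0})$ on the dense open set where it is invertible, extending to all parameters by polynomial continuity. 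Both approaches are legitimate. The paper's has the aesthetic advantage of remaining a pure chain of adjugate identities with no explicit density argument (though one is buried inside the standard proof that ${\rm adj}(XY)={\rm adj}(Y)\,{\rm adj}(X)$); yours is arguably more elementary, using only the rank-one determinant formula, but it requires you to make the generic-parameter extension explicit. One small simplification to note: at the cancellation step, having established $\varrho_\cS^{\sf T}P_\cS(I-A_{\cS_0})=\det(P_\cS)\varrho^{\sf T}(I-A_{\cS_0})$ with $(I-A_{\cS_0})$ invertible, you can simply cancel the common right factor; the auxiliary formula $e_1^{\sf T}(I-A_{\cS_0})^{-1}=\varrho^{\sf T}/\det(I-A_{\cS_0})$ is not actually needed.
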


\begin{proof}
By \Lemm~\ref{le:ncssMatDiff} we have
\begin{eqnarray}
e_1^{\sf T} {\rm adj}(P_\cS (I-A_{\cS_0})) & = & e_1^{\sf T} {\rm adj}(I-M_\cS)
= \varrho_\cS^{\sf T} \nonumber \\
\Rightarrow~~e_1^{\sf T} {\rm adj}(I-A_{\cS_0}) {\rm adj}(P_\cS) & = &
\varrho_\cS^{\sf T}~~({\rm since~}{\rm adj}(XY) = {\rm adj}(Y)
{\rm adj}(X){\rm ~for~any~} X,Y) \nonumber \\
\Rightarrow~~\varrho^{\sf T} {\rm adj}(P_\cS) & = &
\varrho_\cS^{\sf T}~~{\rm by~(\ref{eq:varrho})} \nonumber \\
\Rightarrow~~\det(P_\cS) \varrho^{\sf T} & = &
\varrho_\cS^{\sf T} P_\cS~~{\rm by~(\ref{eq:adjRelation})} \nonumber
\end{eqnarray}
\end{proof}

If $I-M_\cS(\xi)$ is nonsingular,
by \Lemm~\ref{le:generalizedsFormula} and (\ref{eq:sStarSalt}),
\begin{equation}
s^{*\cS}(\xi) = \frac{\det(P_\cS(\xi))}{\det(I-M_\cS(\xi))} \varrho^{\sf T}(\xi) b(\xi)
\Bigg|_{\mu = 0} \mu + o(\mu) \;.
\label{eq:sStarS}
\end{equation}
This expression relates the linear component of
$s^{*\cS}(\xi)$ simply in terms of $\varrho^{\sf T} b$
(which appears in the fixed point equation (\ref{eq:sStarj}))
and the determinants of $P_\cS$ and $I-M_\cS$.
(Feigin's result concerning the creation of 2-cycles at \bcb s
(see \cite{DiFe99}) follows from (\ref{eq:sStarS})
by substituting $\cS = \sL \sR$ and $\cS = \sR \sL$).

We finish this section with an important lemma
that is most simply stated for (\ref{eq:pwsMap}) in the absence of nonlinear terms,
for then all $o(\mu)$ terms given above vanish.
Though this result is proved in \cite{SiMe09},
with the use of \Lemm~\ref{le:generalizedsFormula}
we are now able to provide a pithier proof.

\begin{lemma}
Suppose the map (\ref{eq:pwsMap}) is \pwl, that is $g^\sL = g^\sR = 0$,
and assume $\mu \ne 0$ and $\varrho^{\sf T} b \ne 0$.
\begin{enumerate}[label=\roman{*}),ref=\roman{*}]
\item
\label{it:ScycleBC}
If $I-M_\cS$ is nonsingular, then the unique fixed point of $f^\cS$,
$x^{*\cS}$, given by (\ref{eq:xStarS}),
lies on the \sw~if and only if $P_\cS$ is singular.
\item
\label{it:ScycleSN}
If $P_\cS$ is nonsingular, then $f^\cS$ has a fixed point
if and only if $I-M_\cS$ is nonsingular.
\end{enumerate}
\label{le:BCSN}
\end{lemma}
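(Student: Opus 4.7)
The plan is to work directly from the exact piecewise-linear fixed-point equation for $f^\cS$, which by (\ref{eq:fSexpansion}) (with $g^\sL = g^\sR = 0$, so the $o(\mu)$ and $o(x)$ remainders vanish) reads
\begin{equation}
(I - M_\cS)\,x = P_\cS\, b\,\mu . \nonumber
\end{equation}
Both parts of the lemma follow from examining this equation together with the already-proved identity $\varrho_\cS^{\sf T} P_\cS = \det(P_\cS)\,\varrho^{\sf T}$ of \Lemm~\ref{le:generalizedsFormula}.

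For part (\ref{it:ScycleBC}), I would note that nonsingularity of $I-M_\cS$ gives the unique fixed point $x^{*\cS} = (I-M_\cS)^{-1} P_\cS b\, \mu$, whose first component is given \emph{exactly} (no $o(\mu)$) by (\ref{eq:sStarS}), namely
\begin{equation}
s^{*\cS} = \frac{\det(P_\cS)}{\det(I - M_\cS)}\,\varrho^{\sf T} b\,\mu. \nonumber
\end{equation}
Since $\mu \ne 0$, $\varrho^{\sf T} b \ne 0$, and $\det(I-M_\cS) \ne 0$ by hypothesis, the condition $s^{*\cS} = 0$ reduces to $\det(P_\cS) = 0$, which is exactly what we want.

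For part (\ref{it:ScycleSN}), the forward direction is immediate from the explicit formula for $x^{*\cS}$ used above. For the converse I would argue by contradiction: assume $P_\cS$ is nonsingular, $I - M_\cS$ is singular, and that some $x$ satisfies $(I-M_\cS) x = P_\cS b\,\mu$. Left-multiplying by $\operatorname{adj}(I-M_\cS)$ and invoking (\ref{eq:adjRelation}) collapses the left-hand side to $\det(I-M_\cS)\, x = 0$, forcing
\begin{equation}
\operatorname{adj}(I-M_\cS)\, P_\cS\, b\, \mu = 0. \nonumber
\end{equation}
Reading off the first component (i.e.\ left-multiplying by $e_1^{\sf T}$) converts the left-hand side into $\varrho_\cS^{\sf T} P_\cS b\,\mu$ by the definition (\ref{eq:varrhoS}), which by \Lemm~\ref{le:generalizedsFormula} equals $\det(P_\cS)\,\varrho^{\sf T} b\,\mu$. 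All three factors are nonzero by assumption, yielding the desired contradiction.

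There is no serious obstacle here, since the key algebraic content is already bundled into \Lemm~\ref{le:generalizedsFormula}; the only subtle step is remembering that when $I - M_\cS$ is singular one cannot invert but one can still extract a scalar solvability condition by multiplying the fixed-point equation by $\operatorname{adj}(I-M_\cS)$ and then projecting onto $e_1^{\sf T}$.
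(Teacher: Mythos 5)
Your proof is correct and takes essentially the same approach as the paper: part (\ref{it:ScycleBC}) follows from the exact form of (\ref{eq:sStarS}), and for the converse in part (\ref{it:ScycleSN}) your two-step left-multiplication by $\operatorname{adj}(I-M_\cS)$ and then $e_1^{\sf T}$ is precisely the paper's single left-multiplication by $\varrho_\cS^{\sf T}$, with both appealing to \Lemm~\ref{le:generalizedsFormula} to produce the contradiction.
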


\begin{proof}
Part (\ref{it:ScycleBC}) follows immediately from (\ref{eq:sStarS}).
If $I-M_\cS$ is nonsingular,
part (\ref{it:ScycleSN}) is trivial by (\ref{eq:xStarS}).
Suppose $I-M_\cS$ is singular and suppose for a contradiction
$f^\cS$ has a fixed point $x^{*\cS}$.
Then by (\ref{eq:fSexpansion}) we have
\begin{equation}
(I-M_\cS) x^{*\cS} = P_\cS b \mu \;.
\nonumber
\end{equation}
Multiplication of this by $\varrho_\cS^{\sf T}$ (\ref{eq:varrhoS}) on the left yields
\begin{equation}
\det(I-M_\cS) s^{*\cS} = \det(P_\cS) \varrho^{\sf T} b \mu \;,
\nonumber
\end{equation}
where we have also used
(\ref{eq:adjRelation}) and \Lemm~\ref{le:generalizedsFormula}.
This provides a contradiction because the left hand side of the previous equation
is zero, whereas by assumption the right hand side is nonzero.
\hfill
\end{proof}

%%%%%%%%%%%%%%%%%%%%%%%%%%
\section{Generalized shrinking points}
\label{sec:GSP}

At this stage we find it useful to impose the extra
assumption that the map (\ref{eq:pwsMap}) under investigation here
is piecewise-$C^K$, for some $K \in \mathbb{N}$.
This allows us to derive series expansions of smooth components
of the map and iterates of the map.
In particular this assumption allows us to apply the center
manifold theorem necessary for proving the
existence of saddle-node bifurcations in \S\ref{sec:UNFOLD}.

In order to unfold a generalized \shr, we must first
give a precise definition of such a point.
We use the results of the previous section to write down
assumptions that guarantee the existence of a periodic solution
with two points on the \sw.
As in \cite{SiMe09}, it is useful to assume that
this periodic solution is admissible.
To state this assumption we need to renormalize the map (\ref{eq:pwsMap}).

Recall that the map (\ref{eq:fiForm}) depends upon 
an arbitrary vector of parameters $\xi$, and that $\mu$ denotes
the first component of this vector. Scaling $x$ by $\mu$ gives
a new map $h$ defined through
\begin{equation}
f^\cT(\mu z;\xi) = \mu h^\cT(z;\xi) \;,
\nonumber
\end{equation}
where $z \in \mathbb{R}^N$. Note that when $f^\cT$ is $C^K$ then
$h^\cT$ is $C^{K-1}$, and using (\ref{eq:fiForm}), it has the expansion
\begin{equation}
h^\cT(z;\xi) = b(\xi) + A_\cT(\xi) z + O(\mu) \;.
\nonumber
\end{equation}
For $\mu \ge 0$, the {\em renormalized} map for $z$ is then
\begin{equation}
z_{n+1} = h(z_n;\xi) = \left\{ \begin{array}{lc}
h^\sL(z_n;\xi) , & u_n \le 0 \\
h^\sR(z_n;\xi) , & u_n \ge 0
\end{array} \right. \;,
\label{eq:renormMap}
\end{equation}
where
\begin{equation}
u = e_1^{\sf T} z \;.
\nonumber
\end{equation}

%We refer to (\ref{eq:renormMap}) as the renormalization of (\ref{eq:pwsMap}).
%for $\mu \ge 0$
Whenever $\mu \ge 0$, if $x = \mu z$ then $f(x;\xi) = \mu h(z;\xi)$.
For any $\cS$-cycle  (\ref{eq:xStarS}), we can also let
$x^{*\cS}(\xi) = \mu z^{*\cS}(\xi)$ so that
$z^{*\cS}(\xi)$ is $C^{K-1}$ and is a fixed point of
$h^\cS = h^{\cS_{n-1}} \circ \cdots \circ h^{\cS_0}$.
Since (\ref{eq:renormMap}) is a ``blow-up'' of phase space,
points $z \in \mathbb{R}^N$ are not necessarily near the origin when $\mu$ is small.
The renormalization effectively transfers the $\mu$-dependence of the
\pws~map from the constant term to the nonlinear terms and this scaling is often
helpful in the analysis.
Note that (\ref{eq:renormMap}) is has nontrivial dynamics for $\mu = 0$; indeed in this 
case it is \pwl~and identical to the
map studied in \cite{SiMe09}.

We now use \Lemm~\ref{le:BCSN} to guarantee the existence of 
a periodic solution with two points on the \sw~in terms
of singularity of matrices, $P_{\cS^{(i)}}$ (\ref{eq:bcMatrix}).

\begin{definition}[Generalized Shrinking Point]
Consider the map (\ref{eq:pwsMap}) with $N \ge 2$
and suppose $\varrho^{\sf T}(0) b(0) \ne 0$.
Let $\cS = \cS[\frac{m}{n},\frac{l}{n}]$
be a rotational \sew~with $2 \le l \le n-2$.
Suppose
\begin{equation}
P_\cS(0) {\rm ~and~} P_{\cS^{((l-1)d)}}(0) {\rm ~are~singular}
\hspace{10mm} {\rm (the~singularity~condition).}
\nonumber
\end{equation}
Let
\begin{eqnarray}
\check{\cS} & = & \cS^{\overline{0}} \;, \label{eq:checkS} \\
\hat{\cS} & = & \cS^{\overline{ld}} \;, \label{eq:hatS}
\end{eqnarray}
and assume $I-M_{\check{\cS}}(0)$ and $I-M_{\hat{\cS}}(0)$ are nonsingular.
Let $\{ \check{x}_i(\xi) \}$ be the unique $\check{\cS}$-cycle near the origin
($\check{x}_0(\xi)$ is given by (\ref{eq:xStarS})).
Let
\begin{equation}
y_i = \check{z}_i(0) \;,
\label{eq:yidef}
\end{equation}
where $\mu \check{z}_i(\xi) = \check{x}_i(\xi)$.
If the orbit $\{ y_i \}$ is admissible
then we say that (\ref{eq:pwsMap}) is at a {\em generalized \shr}~when $\xi = 0$.
\label{def:gsp}
\end{definition}

The sequences $\check{\cS}$ (\ref{eq:checkS}) and $\hat{\cS}$ (\ref{eq:hatS})
are rotational with one less and one more $\sL$ than 
$\cS = \cS[\frac{m}{n},\frac{l}{n}]$, respectively
(specifically, $\check{\cS} = \cS[\frac{m}{n},\frac{l-1}{n}]^{(-d)}$ and
$\hat{\cS} = \cS[\frac{m}{n},\frac{l+1}{n}]$).
The periodic solution $\{ y_i \}$ is fundamental to the shrinking point.
As stated in the following lemma,
it has two points on the \sw.
Moreover if $\{ \hat{x}_i(\xi) \}$ denotes the unique $\hat{\cS}$-cycle near the origin,
then $\hat{z}_i(0) = \check{z}_i(0) = y_i$.
We let
\begin{equation}
t_i = e_1^{\sf T} y_i \;.
\label{eq:tidef}
\end{equation}

\begin{lemma}%[\cite{SiMe09}]
Suppose (\ref{eq:pwsMap}) is at a generalized \shr~when $\xi = 0$.
Then,
\begin{enumerate}[label=\roman{*}),ref=\roman{*}]
\item
\label{it:t0tld}
$t_0 = t_{ld} = 0$;
\item
\label{it:tneighbors}
$t_d, t_{(l-1)d} < 0$, $t_{-d}, t_{(l+1)d} > 0$;
\item
\label{it:yiperiodn}
$\{ y_i \}$ has minimal period $n$;
\end{enumerate}
\label{le:shrPoint}
\end{lemma}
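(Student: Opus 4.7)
The plan is to exploit the fact that at $\xi = 0$ the renormalized map $h$ from (\ref{eq:renormMap}) is piecewise-linear, so all three conclusions become properties of the shrinking-point orbit of a PWL map. My main tools will be the fixed-point formula (\ref{eq:sStarS}) together with the two sequence identities $P_{\cS^{\overline{0}}} = P_\cS$ from (\ref{eq:PindepS0}) and $\cS^{((l-1)d)\overline{0}} = \cS^{\overline{0}(ld)}$ from (\ref{eq:switchS}).

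For part (i), I will first note that $\check{\cS} = \cS^{\overline{0}}$, so (\ref{eq:PindepS0}) gives $P_{\check{\cS}} = P_\cS$, which is singular by hypothesis; applying (\ref{eq:sStarS}) to the $\check{\cS}$-cycle at $\mu = 0$ then yields $t_0 = 0$. For $t_{ld}$, I will use that $y_{ld}$ is a fixed point of $h^{\check{\cS}^{(ld)}}$ by the cyclic structure of a periodic orbit; chaining (\ref{eq:switchS}) with (\ref{eq:PindepS0}) gives $P_{\check{\cS}^{(ld)}} = P_{\cS^{((l-1)d)}}$, which is also singular. The denominators in (\ref{eq:sStarS}) are nonzero via \Lemm~\ref{le:cyclicDetDx} together with the hypothesis $\det(I - M_{\check{\cS}}(0)) \ne 0$, and $\varrho^{\sf T}(0) b(0) \ne 0$ is built into \Defi~\ref{def:gsp}, so $t_{ld} = 0$ follows.

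For part (ii), the weak inequalities $t_d, t_{(l-1)d} \le 0$ and $t_{-d}, t_{(l+1)d} \ge 0$ drop out of the admissibility of $\{y_i\}$ as a $\check{\cS}$-cycle, once one reads off $\check{\cS}_d = \check{\cS}_{(l-1)d} = \sL$ and $\check{\cS}_{-d} = \check{\cS}_{(l+1)d} = \sR$ from (\ref{eq:rss}). To upgrade to strict inequalities, I plan to argue by contradiction: if, say, $t_d = 0$, then $y_d$ lies on the \sw~and by \Lemm~\ref{le:solvesAlso} the orbit is simultaneously a $\check{\cS}^{\overline{d}}$-cycle. Applying (\ref{eq:sStarS}) to this sequence at $\mu = 0$ would then force $P_{\check{\cS}^{\overline{d}}}(0)$ to be singular, i.e.\ an extra singular $P$-matrix not stipulated in \Defi~\ref{def:gsp}, which contradicts the codimension-two characterization of PWL shrinking points established in \cite{SiMe09} applied to $h$ at $\mu = 0$. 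The other three sign conditions follow identically.

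For part (iii), I would assume toward contradiction a minimal period $p \mid n$ with $p < n$. The same \Lemm~\ref{le:solvesAlso} trick used in (ii) extends the strict inequalities to every intermediate iterate, so only $y_0$ and $y_{ld}$ visit the \sw. Hence at every index $i \not\equiv 0, ld \pmod n$ the symbol $\check{\cS}_i$ is uniquely determined by $y_i$, and the period-$p$ repetition $y_{i+p} = y_i$ forces $\check{\cS}_{i+p} = \check{\cS}_i$ for all such $i$. But $\check{\cS}$ is a one-position flip of the rotational sequence $\cS[\frac{m}{n}, \frac{l}{n}]$, which has minimal period $n$ because $\gcd(m, n) = 1$, so it cannot acquire a shorter period in this way. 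The main obstacle I anticipate is making the strictness in (ii) rigorous without directly invoking \cite{SiMe09}: since \Defi~\ref{def:gsp} only stipulates two singular $P$-matrices, ruling out ``coincidental'' singularities of $P_{\check{\cS}^{\overline{j}}}$ at the other intermediate indices seems cleanest as an appeal to the PWL classification rather than a fresh combinatorial derivation.
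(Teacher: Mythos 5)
The paper does not actually include a proof of this lemma---it simply cites \cite{SiMe09}---so there is nothing in-text to compare against; what follows is an assessment of your argument on its own terms. Your part~(i) is correct and clean: the identities $P_{\check{\cS}} = P_\cS$ and $\check{\cS}^{(ld)} = (\cS^{((l-1)d)})^{\overline{0}}$ (the latter coming from (\ref{eq:switchS}) and (\ref{eq:PindepS0})) combined with (\ref{eq:sStarS}) and \Lemm~\ref{le:cyclicDetDx} give $t_0 = t_{ld} = 0$ exactly as you say.

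The strict inequalities in part~(ii) are where the argument genuinely fails, not just where it feels shaky. Two separate problems. First, to apply (\ref{eq:sStarS}) to the sequence $\check{\cS}^{\overline{d}}$ you need $I - M_{\check{\cS}^{\overline{d}}}(0)$ to be nonsingular, and \Defi~\ref{def:gsp} gives you no such thing: the only nonsingularity assumptions it grants are for $I - M_{\check{\cS}}$ and $I - M_{\hat{\cS}}$, and $\check{\cS}^{\overline{d}} = \cS^{\overline{0}\overline{d}}$ is neither of these nor a cyclic shift of them. Second, even if you could conclude that $P_{\check{\cS}^{\overline{d}}}(0)$ is singular, this is not a contradiction. \Defi~\ref{def:gsp} requires that two particular $P$-matrices \emph{are} singular; it nowhere forbids additional ones from being singular. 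Your proposed fix---appealing to the ``codimension-two characterization of PWL shrinking points'' in \cite{SiMe09}---is circular in spirit, since \Lemm~\ref{le:shrPoint} \emph{is} part of that characterization and the paper's proof of it lives in the very reference you want to invoke. A working argument would likely need the assumption that the eigenvalue~1 of $M_\cS(0)$ has algebraic multiplicity one (this appears later in the paper, in \Lemm~\ref{le:expansions} and in the proof of \Ther~\ref{th:unfold}, where it yields $y_0 - y_d \parallel v$ with $e_1^{\sf T} v \ne 0$, hence $t_d \ne t_0$), but that hypothesis is not present in \Lemm~\ref{le:shrPoint}.

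Part~(iii) has a more fundamental gap. Your argument hinges on the claim that ``only $y_0$ and $y_{ld}$ visit the switching manifold,'' which you justify by ``the same \Lemm~\ref{le:solvesAlso} trick'' extended to every intermediate iterate. But that trick (even granting it worked) only addresses the four indices $d, (l-1)d, -d, (l+1)d$, and more to the point the statement ``$y_0$ and $y_{ld}$ are the only points of $\{y_i\}$ on the \sw'' is listed as a \emph{separate, explicit hypothesis} in \Ther~\ref{th:unfold}. That the paper adds it there rather than proving it is strong evidence that it does not follow from the hypotheses of \Defi~\ref{def:gsp}, and hence is unavailable to you when proving \Lemm~\ref{le:shrPoint}. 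Without it, $t_{i_0} = 0$ for some $i_0 \notin \{0,ld\}$ remains possible, and the ``period-$p$ repetition forces $\check{\cS}_{i+p} = \check{\cS}_i$'' step breaks exactly at such an index. The underlying fact that $\check{\cS}$ (being a cyclic shift of the rotational sequence $\cS[\frac{m}{n},\frac{l-1}{n}]$ with $\gcd(m,n)=1$) has minimal word-period $n$ is correct and useful, but linking orbit-period to word-period requires ruling out those extra zero crossings, which your argument does not do.
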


See \cite{SiMe09} for a proof.
Lemma \ref{le:shrPoint} essentially states that the orbit
$\{ y_i \}$ appears as in Fig.~\ref{fig:ySchem}.

A consequence of \Lemm~\ref{le:shrPoint} is that several important
matrices are singular.
To see this, first note that
the point $y_0$ is a fixed point of $h^\cS(y;0)$.
By \Lemm~\ref{le:solvesAlso}
and \Lemm~\ref{le:shrPoint}(\ref{it:t0tld}),
$y_0$ is also a fixed point of $h^{\cS^{\overline{0}\overline{ld}}}(y;0)$.
Using (\ref{eq:rss}), $\cS^{\overline{0}\overline{ld}} = \cS^{(-d)}$,
and so $y_d$ is a fixed point of $h^\cS(y;0)$.
The points $y_0$ and $y_d$ are distinct
(by \Lemm~\ref{le:shrPoint}(\ref{it:yiperiodn})
and since $\{ y_i \}$ is admissible),
in other words there are multiple $\cS$-cycles.
Hence the matrix $I-M_\cS(0)$ must be singular and
consequently each $P_{\cS^{(i)}}(0)$ is singular by
\Lemm~\ref{le:BCSN}(\ref{it:ScycleSN})
producing the following result (given also in \cite{SiMe09}):

\begin{corollary}
Suppose (\ref{eq:pwsMap}) is at a generalized \shr~when $\xi = 0$.
Then,
\begin{enumerate}[label=\roman{*}),ref=\roman{*}]
\item
\label{it:MSsing}
$I-M_\cS(0)$ is singular;
\item
\label{it:PSising}
$P_{\cS^{(i)}}(0)$ is singular, for all $i$.
\end{enumerate}
\label{cor:singMat}
\end{corollary}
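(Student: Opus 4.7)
The plan is to leverage the admissible orbit $\{y_i\}$ supplied by Definition \ref{def:gsp}, which by Lemma \ref{le:shrPoint}(\ref{it:t0tld}) meets the switching manifold at positions $0$ and $ld$. By construction $\{y_i\}$ is a $\check{\cS}$-cycle, and iterated use of Lemma \ref{le:solvesAlso} at these two positions promotes it first to an $\cS$-cycle (since $\check{\cS}^{\overline{0}} = \cS$) and then to a $\cS^{\overline{0}\overline{ld}}$-cycle. A direct check against (\ref{eq:rss}) gives $\cS^{\overline{0}\overline{ld}} = \cS^{(-d)}$, so $y_0$ is simultaneously a fixed point of $h^\cS(\cdot;0)$ and of $h^{\cS^{(-d)}}(\cdot;0)$.

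To extract a second fixed point of $h^\cS(\cdot;0)$, I would reindex the $\cS^{(-d)}$-cycle: the shifted sequence $\tilde{y}_i := y_{i+d}$ satisfies $\tilde{y}_{i+1} = h^{\cS^{(-d)}_{i+d}}(\tilde{y}_i) = h^{\cS_i}(\tilde{y}_i)$, so $\{\tilde{y}_i\}$ is an $\cS$-cycle with starting point $\tilde{y}_0 = y_d$. Hence $y_d$ is a fixed point of $h^\cS(\cdot;0)$, and Lemma \ref{le:shrPoint}(\ref{it:yiperiodn}) together with $d \in \{1,\ldots,n-1\}$ gives $y_d \ne y_0$. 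At $\mu = 0$ the composition $h^\cS(\cdot;0)$ is the affine map $z \mapsto M_\cS(0) z + P_\cS(0) b(0)$, and an affine map can admit two distinct fixed points only when its linear part fails to be invertible, establishing part (\ref{it:MSsing}).

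For part (\ref{it:PSising}), observe that $M_{\cS^{(i)}}(0)$ is a cyclic permutation of the matrix product defining $M_\cS(0)$ and hence shares its characteristic polynomial, so $I - M_{\cS^{(i)}}(0)$ is singular for every $i$. Reindexing the $\cS$-cycle $\{y_i\}$ by $i$ shows that $y_i$ is a fixed point of $h^{\cS^{(i)}}(\cdot;0)$. Since $h(\cdot;0)$ is piecewise-linear of the form covered by Lemma \ref{le:BCSN} (with the role of ``$\mu$'' in the lemma played by the nonzero constant $1$) and the nondegeneracy $\varrho^{\sf T}(0) b(0) \ne 0$ is supplied by Definition \ref{def:gsp}, the contrapositive of Lemma \ref{le:BCSN}(\ref{it:ScycleSN}) applied to the sequence $\cS^{(i)}$ forces $P_{\cS^{(i)}}(0)$ to be singular.

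The only step requiring real care is the symbolic-dynamics bookkeeping: verifying $\cS^{\overline{0}\overline{ld}} = \cS^{(-d)}$ from (\ref{eq:rss}) and confirming that reindexing an orbit transports its itinerary cyclically in the expected way. Both are routine once the conventions are fixed, after which the linear-algebra steps and the appeal to Lemma \ref{le:BCSN} are immediate.
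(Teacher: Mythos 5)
Your proof is correct and follows essentially the same line of reasoning as the paper: promote $\{y_i\}$ via Lemma~\ref{le:solvesAlso} and the identity $\cS^{\overline{0}\overline{ld}} = \cS^{(-d)}$ to obtain $y_d$ as a second fixed point of $h^\cS(\cdot;0)$, conclude $I-M_\cS(0)$ is singular since the map is affine at $\mu = 0$, and then invoke the contrapositive of Lemma~\ref{le:BCSN}(\ref{it:ScycleSN}) for each cyclic shift. You merely spell out some steps the paper leaves terse (the explicit reindexing $\tilde{y}_i = y_{i+d}$, and the per-$i$ observation that $M_{\cS^{(i)}}(0)$ shares the eigenvalue $1$ with $M_\cS(0)$ while $y_i$ furnishes the required fixed point of $h^{\cS^{(i)}}$), which is harmless and arguably clearer.
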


For reader convenience let us briefly summarize symbols used:
\begin{eqnarray}
x \in \mathbb{R}^N \;, & \hspace{20mm} &
s = e_1^{\sf T} x \;, \nonumber \\
y_i = \check{z}_i(0) = \hat{z}_i(0) \;, & \hspace{20mm} &
t = e_1^{\sf T} y \;, \nonumber \\
\mu z = x \;, & \hspace{20mm} &
u = e_1^{\sf T} z \;. \nonumber
\end{eqnarray}

%%%%%%%%%%%%%%%%%%%%%%%%%%%%%%%%%%%%%%%%%%%%%%%%%%%%%%%%%%%%%
\begin{figure}[ht]
\begin{center}
\includegraphics[width=8cm,height=6cm]{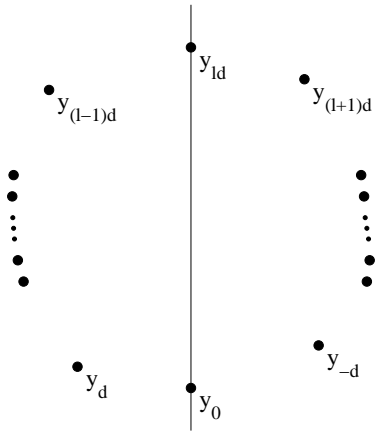}
%\setlength{\unitlength}{1cm}
%\begin{picture}(8,6)
%\put(0,0){\includegraphics[width=8cm,height=6cm]{ySchem}}
%\end{picture}
\caption{
The orbit $\{ y_i \}$ as described by \Lemm~\ref{le:shrPoint}.
The points $y_0$ and $y_{ld}$ lie on the \sw, $s=0$.
\label{fig:ySchem}
}
\end{center}
\end{figure}
%%%%%%%%%%%%%%%%%%%%%%%%%%%%%%%%%%%%%%%%%%%%%%%%%%%%%%%%%%%%%

%%%%%%%%%%%%%%%%%%%%%%%%%%
\section{Unfolding generalized shrinking points}
\label{sec:UNFOLD}

We begin by performing a change of coordinates,
similar to that in \cite{SiMe09},
such that, locally, two boundaries of the associated \tong~lie
on coordinate planes.
We are given that $\check{u}_0(\xi)$ and $\check{u}_{ld}(\xi)$ are $C^{K-1}$
and $\check{u}_0(0) = \check{u}_{ld}(0) = 0$
(\Lemm~\ref{le:shrPoint}(\ref{it:t0tld})).
Since a generalized \shr~is a codimension-three phenomenon,
we assume there are three bifurcation parameters
\begin{equation}
\xi = (\mu,\eta,\nu) \;. \nonumber 
\end{equation}
As long as the matrix
\begin{equation} 
\left. \left[ \begin{array}{cc}
\displaystyle{\frac{\partial \check{u}_0}{\partial \eta}} &
\displaystyle{\frac{\partial \check{u}_0}{\partial \nu}} \\
\displaystyle{\frac{\partial \check{u}_{ld}}{\partial \eta}} &
\displaystyle{\frac{\partial \check{u}_{ld}}{\partial \nu}}
\end{array} \right] \right|_{\xi = 0} \;, \nonumber
\end{equation}
is nonsingular, we may perform a nonlinear coordinate change such that
\begin{eqnarray}
\check{u}_0(\xi) & = & \eta (1 + O(1)) \;, \label{eq:cu_0} \\
\check{u}_{ld}(\xi) & = & \nu (1 + O(1)) \;. \label{eq:cu_ld}
\end{eqnarray}
Consequently, on the coordinate plane $\eta = 0$,
the point $\check{x}_0$ of the $\check{\cS}$-cycle lies on the \sw.
According to \Lemm~\ref{le:solvesAlso}, the $\check{\cS}$-cycle
is also an $\check{\cS}^{\overline{0}} = \cS$-cycle here.
Similarly on $\nu = 0$, $\check{x}_{ld}$ lies on the \sw.
By \Lemm~\ref{le:solvesAlso}, here the $\check{\cS}$-cycle
is also an $\check{\cS}^{\overline{ld}} = \cS^{(-d)}$-cycle
(this equality follows simply from (\ref{eq:rss})).
Along the $\mu$-axis (in three-dimensional parameter space)
both $\check{x}_0$ and $\check{x}_{ld}$ lie on the \sw~so here
the $\check{\cS}$-cycle is also an $\hat{\cS}$-cycle. Hence on the $\mu$-axis
the orbits $\{ \check{x}_i(\xi) \}$ and $\{ \hat{x}_i(\xi) \}$ are identical.

In order for $\xi$ to properly unfold a generalized \shr~we need a
nondegeneracy condition on the nonlinear terms of (\ref{eq:pwsMap})
that guarantees the \shr~breaks apart in the usual manner
as $\mu$ increases from 0.
Along the $\mu$-axis, $\check{x}_0$ is a fixed point of $f^\cS$ and when
$\mu = 0$ it has an associated multiplier of 1.
We will assume that the algebraic multiplicity of this multiplier is one.
An appropriate condition
on the nonlinear terms is that this multiplier varies linearly
(to lowest order) with respect to $\mu$, for small $\mu$.
That is, we require
\begin{equation}
\frac{\partial}{\partial \mu} \det \left( I - D_x f^\cS(\check{x}_0(\xi);\xi) \right)
\Big|_{\xi = 0} \ne 0 \;.
\label{eq:m1}
\end{equation}
We are free to scale the parameter $\mu$ before performing the analysis.
However, we find it is most instructive to merely fix the sign
of $\mu$ in a way that ensures resonance arises for $\mu > 0$.
It turns out that the following assumption achieves this effect:
\begin{equation}
{\rm sgn} \left( \frac{\partial}{\partial \mu}
\det \left(I - D_x f^\cS(\check{x}_0(\xi);\xi) \right)
\Big|_{\xi = 0} \right) = {\rm sgn}(\det(I-M_{\check{\cS}}(0))) \;.
\label{eq:muSign}
\end{equation}

Our unfolding theorem details the existence of admissible periodic solutions
in regions of three-dimensional parameter space that are bounded by
six different surfaces.
As a consequence of the choice (\ref{eq:cu_0}) and (\ref{eq:cu_ld}),
three of these surfaces are simply the coordinate planes.
The following lemma
gives series expansions of the remaining three surfaces.

\begin{lemma}
Suppose the piecewise-$C^K$ map (\ref{eq:pwsMap}) is at a
generalized \shr~when $\xi = 0$ and that $K \ge 4$.
Assume that the only eigenvalue of $M_\cS(0)$ on the unit circle is $1$
and that it has algebraic multiplicity one, and that
(\ref{eq:cu_0}), (\ref{eq:cu_ld}) and (\ref{eq:muSign}) hold.
Let
\begin{eqnarray}
\check{\delta} & = & \det(I-M_{\check{\cS}}(0)) \;, \label{eq:dCheck} \\
\hat{\delta} & = & \det(I-M_{\hat{\cS}}(0)) \;, \label{eq:dHat} \\
k_0 & = & \frac{\partial}{\partial \mu}
\det(I - D_x f^\cS(\check{x}_0(\xi);\xi)) \Big|_{\xi = 0} \;, \label{eq:k0}
\end{eqnarray}
(which are all nonzero due to assumptions
in the definition of a generalized \shr).
Then,
\begin{enumerate}[label=\roman{*}),ref=\roman{*}]
\item
\label{it:ddFormula}
$\displaystyle \frac{\check{\delta}}{\hat{\delta}} =
-\frac{t_d t_{(l-1)d}}{t_{-d} t_{(l+1)d}}$;
\item
\label{it:phi_1}
$\hat{u}_{ld}(\xi) = 0$ on a $C^{K-1}$ surface,
$\displaystyle \eta = \phi_1(\mu,\nu) = \nu \left(
-\frac{k_0 t_d}{\check{\delta} t_{(l+1)d}} \mu
- \frac{t_d}{t_{(l-1)d} t_{(l+1)d}} \nu + O(2) \right)$;
\item
\label{it:phi_2}
$\hat{u}_0(\xi) = 0$ on a $C^{K-1}$ surface,
$\displaystyle \nu = \phi_2(\mu,\eta) = \eta \left(
\frac{k_0 t_{(l-1)d}}{\check{\delta} t_{-d}} \mu
- \frac{t_{(l-1)d}}{t_d t_{-d}} \eta + O(2) \right)$;
\item
\label{it:Lambda}
there is a $C^{K-2}$ function given by
\begin{equation}
\Lambda(\xi) =
\left( \frac{k_0}{\check{\delta}} \mu +
\frac{1}{t_d} \eta + \frac{1}{t_{(l-1)d}} \nu \right)^2
- \frac{4 k_0}{\check{\delta} t_d} \mu \eta + o(2) \;,
\label{eq:Lambda}
\end{equation}
such that for $\mu > 0$ classical saddle-node bifurcations of $\cS$-cycles occur
when $\Lambda(\xi) = 0$ (though are not necessarily admissible);
\item
\label{it:zeta12}
if $\mu > 0$, then $\Lambda(\xi) \le 0$ only if $\eta \le 0$
and $\nu \ge \phi_2(\mu,\eta)$;
moreover $\Lambda(\xi) = 0$ along
$(\mu,0,\zeta_1(\mu))$ and 
$(\mu,\zeta_2(\mu),\phi_2(\mu,\zeta_2(\mu)))$ for
$C^{K-1}$ functions $\zeta_1$ and $\zeta_2$,
\begin{eqnarray}
\zeta_1(\mu) & = & -\frac{k_0 t_{(l-1)d}}{\check{\delta}} \mu +
O(\mu^2) \;, \label{eq:zeta1} \\
\zeta_2(\mu) & = & \frac{k_0 t_d}{\check{\delta}} \mu +
O(\mu^2) \;. \label{eq:zeta2}
\end{eqnarray}
\end{enumerate}
\label{le:expansions}
\end{lemma}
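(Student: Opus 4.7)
My plan is to work in the renormalized map (\ref{eq:renormMap}), whose $C^{K-1}$ smoothness in $z$ and $\xi$ lets me lift the piecewise-linear shrinking-point analysis of \cite{SiMe09} (corresponding to $\mu = 0$) to small $\mu > 0$. At $\xi = 0$ the three cycles $\cS$, $\check{\cS}$, $\hat{\cS}$ all collapse to the orbit $\{y_i\}$, because $y_0$ and $y_{ld}$ lie on the switching manifold where $h^{\sL} = h^{\sR}$. For $\xi$ in a small neighborhood of the origin, the $\check{\cS}$- and $\hat{\cS}$-cycles persist as $C^{K-1}$ objects (their $I-M$ matrices are nonsingular by hypothesis), while the primary $\cS$-cycle, whose $M_\cS(0)$ carries a simple unit eigenvalue, splits into a saddle-node pair.

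For parts (i)--(iii) the core tool will be formula (\ref{eq:sStarS}) applied to the four cycles whose $0$-th point controls a boundary of the sausage: $\check{\cS}$ and $\check{\cS}^{(ld)}$ give $\check{u}_0, \check{u}_{ld}$, which by (\ref{eq:cu_0})--(\ref{eq:cu_ld}) are already in normal form, while $\hat{\cS}$ and $\hat{\cS}^{(ld)}$ give $\hat{u}_0, \hat{u}_{ld}$. For part (i) I will write $M_{\hat{\cS}}$ as a rank-two perturbation of $M_{\check{\cS}}$ by substituting $A_{\sL} = A_{\sR} - \hat\zeta e_1^{\sf T}$ (\Lemm~\ref{le:firstColumnEqual}) at positions $0$ and $ld$; the identity $\det(I - U V^{\sf T}) = \det(I_2 - V^{\sf T} U)$ then presents $\hat\delta / \check\delta$ as a $2 \times 2$ determinant. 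The four scalar entries are inner products of $e_1$ and its cyclically-shifted analogues with $(I - M_{\check{\cS}}(0))^{-1}$ applied to specific $P$-vectors, and these are identified with $t_{\pm d}$ and $t_{(l \pm 1)d}$ via (\ref{eq:xStarS}) and the fixed-point equation for the renormalized map. Parts (ii) and (iii) then reduce to Taylor expansion of $\hat{u}_{ld}$ and $\hat{u}_0$ about $\xi = 0$: the leading $\eta$- and $\nu$-partials follow from (\ref{eq:cu_0})--(\ref{eq:cu_ld}) combined with part (i), the $\mu$-partials pull in $k_0$ via (\ref{eq:k0}), and the quadratic cross terms come from one further expansion of the relevant determinants.

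For part (iv) I would apply the center manifold theorem to $h^\cS(z;\xi) - z$ at $(y_0, 0)$, treating $\xi$ as a parameter. The simple unit eigenvalue of $M_\cS(0)$ yields a $C^{K-2}$ center manifold on which $h^\cS - \mathrm{id}$ reduces to a scalar $q(\tilde z; \xi) = \alpha \tilde z^{\,2} + \beta(\xi) \tilde z + \gamma(\xi) + o(2)$ with $\beta, \gamma$ vanishing at $\xi = 0$. Fixed points of $h^\cS$ on the center manifold are roots of $q$, and classical saddle-nodes occur where the discriminant $\Lambda = \beta^2 - 4 \alpha \gamma$ vanishes. The leading coefficients are pinned down by consistency: the surface $\{\gamma = 0,\ \tilde z = 0\}$ must reproduce two of the border-collision boundaries from parts (ii)--(iii), which fixes $\gamma$; the $\mu$-slope of $\beta$ is determined by $k_0$ through (\ref{eq:k0}) and (\ref{eq:muSign}); and $\alpha$ is absorbed into the normalization, producing (\ref{eq:Lambda}). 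Part (v) is then algebraic: completing the square reveals that $\Lambda$ is a perfect square plus $-4 k_0 \mu \eta / (\check\delta\, t_d)$, yielding the sign condition, and the two $C^{K-1}$ branches $\zeta_1, \zeta_2$ of $\{\Lambda = 0\}$ emerge by the implicit function theorem along $\eta = 0$ and along $\nu = \phi_2(\mu, \eta)$. I expect the main obstacle to be the rank-two bookkeeping of part (i) -- correctly pairing the four matrix entries with the $t_i$ values and their signs -- since every coefficient in parts (ii)--(v) ultimately rests on this identification.
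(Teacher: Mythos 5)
Your outline departs from the paper's proof at a crucial point, and it is worth separating what is genuinely a viable alternative from what is an unverified gap.

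For part~(\ref{it:ddFormula}) you propose treating $M_{\hat{\cS}}(0)$ as a rank-two perturbation of $M_{\check{\cS}}(0)$ and invoking the Weinstein--Aronszajn identity $\det(I - UV^{\sf T}) = \det(I_2 - V^{\sf T}U)$ to write $\hat\delta/\check\delta$ as a $2\times2$ determinant. This would indeed be a ``direct'' proof of (i), but you never carry out the decisive step: pairing the four entries of the $2\times 2$ matrix with $t_{\pm d}$, $t_{(l\pm 1)d}$ and the correct signs. The identification is not obviously of the form you expect --- the two factors changed sit at cyclic positions $0$ and $ld$, so the $U$- and $V$-columns involve partial products of the $A_{\cS_i}$ separated by blocks, and their contractions with $(I-M_{\check{\cS}}(0))^{-1}$ are not immediately the $t_i$ of (\ref{eq:xStarS}). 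Indeed the paper's authors explicitly flag in the Conclusions that a parameter-free proof of (\ref{it:ddFormula}) remains an open problem; their own derivation is \emph{indirect}, obtained as the byproduct of matching second-order coefficients in the expansions (\ref{eq:cu_0})--(\ref{eq:cu_ld}) against the analogous expansions of $\hat u_0$, $\hat u_{ld}$. You are therefore proposing to do something the authors say they cannot; if you have the explicit bookkeeping it would be a genuine contribution, but as stated it is the load-bearing step and it is absent.

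This matters downstream because the paper's route to (\ref{it:phi_1})--(\ref{it:phi_2}) is not a straight Taylor expansion of $\hat u_0, \hat u_{ld}$. The mechanism is: on each of the slices $\eta=0$, $\nu=0$, $\eta=\phi_1$, $\nu=\phi_2$, one of the four quantities $\check u_0, \check u_{ld}, \hat u_0, \hat u_{ld}$ vanishes, so by \Lemm~\ref{le:solvesAlso} the corresponding $n$-cycle is \emph{also} an $\cS$-cycle; only then can $\check{\cS}$- and $\hat{\cS}$-cycles, whose sequences differ in two symbols, be compared term-by-term by expanding $h^{\cS^{(j)}}$ and $h^{\hat{\cS}^{(j)}}$ about $y_j$ with $t_j=0$ and using that their last $N-1$ Jacobian columns agree. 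Your plan omits this one-symbol-at-a-time reduction and assumes the coefficients ``follow from (i), $k_0$, and one further expansion''; that does not by itself produce the $k_1 = \check\delta/t_d$, $k_2 = \check\delta/t_{(l-1)d}$ identities that the explicit forms of $\phi_1$, $\phi_2$ require.

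Part~(\ref{it:Lambda}) of your plan is essentially the paper's Step~3 modulo a harmless translation ($h^\cS$ at $y_0$ versus the extended $f^\cS$ at the origin), and computing $\Lambda$ as a discriminant of the reduced scalar map is right. However, for part~(\ref{it:zeta12}) you say the curves $\zeta_1,\zeta_2$ ``emerge by the implicit function theorem'' applied to $\{\Lambda=0\}$. That is exactly what cannot be done: $\Lambda$ vanishes to second order at $\xi=0$, so IFT gives nothing. The paper circumvents this by observing that along $\eta=0$ the $\check{\cS}$-cycle is an admissible $\cS$-cycle, and the saddle-node locus there is the solution set of $\det\bigl(I - D_x f^\cS(\check{x}_0(\mu,0,\nu);\mu,0,\nu)\bigr) = 0$, which \emph{does} have a nonzero linear part $k_0\mu + (\check\delta/t_{(l-1)d})\nu$ amenable to IFT. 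You would need this (or an equivalent linearization of the correct scalar equation) rather than $\Lambda$ itself.
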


See Appendix \ref{sec:PROOFS} for a proof.
It is useful to consider the nonsingular, linear coordinate change:
\begin{equation}
\left[ \begin{array}{c}
\tilde{\mu} \\ \tilde{\eta} \\ \tilde{\nu}
\end{array} \right] =
\left[ \begin{array}{ccc}
\displaystyle{\frac{k_0}{\check{\delta}}}
 & -\displaystyle{\frac{1}{t_d}} & \displaystyle{\frac{1}{t_{(l-1)d}}} \\
0 & -\displaystyle{\frac{1}{t_d}} & -\displaystyle{\frac{1}{t_{(l-1)d}}} \\
0 & \displaystyle{\frac{1}{t_d}} & -\displaystyle{\frac{1}{t_{(l-1)d}}}
\end{array} \right]
\left[ \begin{array}{c}
\mu \\ \eta \\ \nu
\end{array} \right] \;,
\label{eq:xiTilde}
\end{equation}
because substitution of (\ref{eq:xiTilde}) into (\ref{eq:Lambda}) produces
\begin{equation}
\Lambda(\xi) = \tilde{\mu}^2 + \tilde{\eta}^2 - \tilde{\nu}^2 + o(2) \;.
\label{eq:LambdaAlt}
\end{equation}
In this alternative coordinate system,
saddle-node bifurcations occur approximately on a cone.
All six surfaces are sketched for $\mu \ge 0$ in Fig.~\ref{fig:gsp3d}.
The surface $\Lambda(\xi) = 0$ intersects the plane $\eta = 0$
tangentially along the curve $(\mu,0,\zeta_1(\mu))$
and the surface $\nu = \phi_2(\mu,\eta)$ tangentially along the curve
$(\mu,\zeta_2(\mu),\phi_2(\mu,\zeta_2(\mu)))$.

%%%%%%%%%%%%%%%%%%%%%%%%%%%%%%%%%%%%%%%%%%%%%%%%%%%%%%%%%%%%%
\begin{figure}[h!]
\begin{center}
\includegraphics[width=16cm,height=13cm]{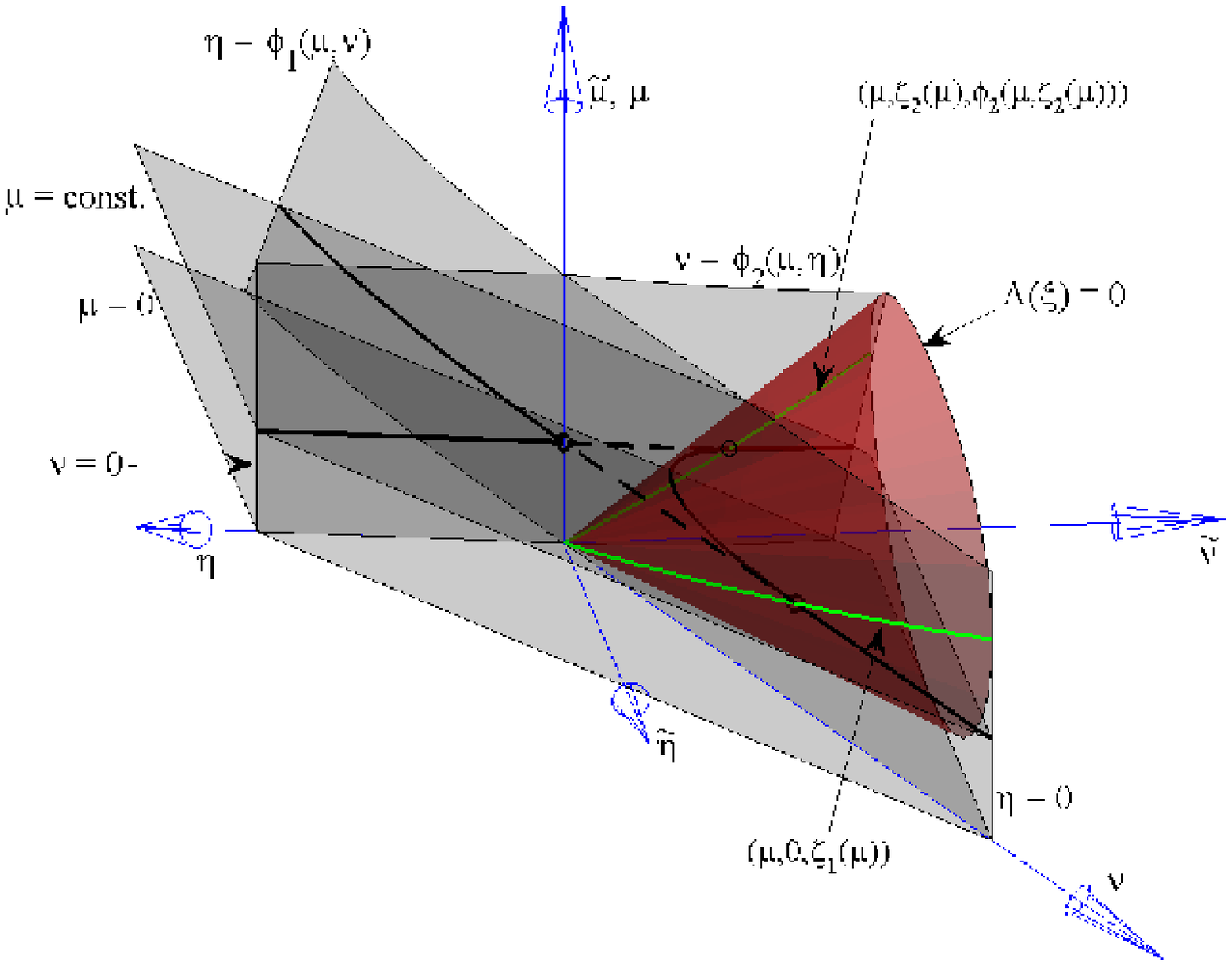}
%\setlength{\unitlength}{1cm}
%\begin{picture}(15,13)
%\put(-.5,0){\includegraphics[width=16cm,height=13cm]{gsp3d}}
%\end{picture}
\caption{
The three-dimensional unfolding of a generalized \shr~for the
\pws, continuous map (\ref{eq:pwsMap}).
The surface $\mu = 0$ corresponds to the \bcb~of a fixed point.
On each of the four surfaces, $\eta = 0$, $\nu = 0$,
$\eta = \phi_1(\mu,\nu)$ and $\nu = \phi_2(\mu,\eta)$
(which share a mutual intersection with the $\mu$-axis),
one point of an $\cS$-cycle lies on the \sw.
The surface $\Lambda(\xi) = 0$ corresponds to
classical saddle-node bifurcations of an $\cS$-cycle.
This surface is approximately a cone, as made evident by transformation to
the alternative $(\tilde{\mu},\tilde{\eta},\tilde{\nu})$-coordinate
system, (\ref{eq:xiTilde}), and tangentially intersects $\eta = 0$
and $\nu = \phi_2(\mu,\eta)$ along the curves
$(\mu,0,\zeta_1(\mu))$ and
$(\mu,\zeta_2(\mu),\phi_2(\mu,\zeta_2(\mu)))$, respectively.
The plane $\mu = {\rm const}$, illustrates the cross-section
shown in Fig.~\ref{fig:gsp2d}.
\label{fig:gsp3d}
}
\end{center}
\end{figure}
%%%%%%%%%%%%%%%%%%%%%%%%%%%%%%%%%%%%%%%%%%%%%%%%%%%%%%%%%%%%%

Different two-dimensional slices of parameter space through Fig.~\ref{fig:gsp3d}
will produce vastly different bifurcation sets.
Since slices defined by fixing the value of $\mu$
have been shown earlier
(see Fig.~\ref{fig:breakup}), in Fig.~\ref{fig:gsp3d}
we draw a plane at fixed $\mu > 0$ and show its curves
of intersection with the nearby surfaces.
This cross-section is shown again in Fig.~\ref{fig:gsp2d}.
A close inspection of the formulas for $\phi_1$ and $\phi_2$
given \Lemm~\ref{le:expansions} reveal a specific geometrical arrangement
near the generalized \shr~as follows.
Since $t_d < 0$, $t_{(l+1)d} > 0$
(\Lemm~\ref{le:shrPoint}(\ref{it:tneighbors})) and
${\rm sgn}(k_0) = {\rm sgn}(\check{\delta})$
(\ref{eq:muSign}), the coefficient for the $\mu \nu$ term
of $\phi_1(\mu,\nu)$ is positive.
Consequently, for small $\mu > 0$
the angle $\theta_2$ in Fig.~\ref{fig:gsp2d} is greater than $90^\circ$.
Similarly the coefficient for the $\mu \eta$ term of $\phi_2(\mu,\eta)$
is negative and so $\theta_1 < 90^\circ$.
Any smooth distortion of Fig.~\ref{fig:gsp2d} will
preserve the property $\theta_1 < \theta_2$.
The curves $\eta = \phi_1(\mu,\nu)$ and $\nu = \phi_2(\mu,\eta)$
bend left and down because the $\nu^2$ term of $\phi_1(\mu,\nu)$
and the $\eta^2$ term of $\phi_2(\mu,\eta)$ are both negative.
The saddle-node locus is approximately a parabola.

Finally, we present the main result.
See Appendix \ref{sec:PROOFS} for a proof.

\begin{theorem}
Suppose (\ref{eq:pwsMap}) is at a
generalized \shr~when $\xi = 0$ and that $K \ge 4$.
Assume that the only eigenvalue of $M_\cS(0)$  on the unit circle is $1$
and that it has algebraic multiplicity one.
Assume $y_0$ and $y_{ld}$ are the only points of $\{ y_i \}$ that
lie on the \sw.
Assume we have (\ref{eq:cu_0}), (\ref{eq:cu_ld}) and (\ref{eq:muSign}).
Let
\begin{eqnarray}
\Psi_1 & = & \{ \xi \in \mathbb{R}^3 ~|~ \mu > 0, \eta > 0, \nu > 0 \}
\;, \nonumber \\
\Psi_2 & = & \{ \xi \in \mathbb{R}^3 ~|~ \mu > 0, \eta < \phi_1(\mu,\nu),
\nu < \phi_2(\mu,\eta) \} \;, \nonumber \\
\Psi_3 & = & \{ \xi \in \mathbb{R}^3 ~|~ \mu > 0, \zeta_2(\mu) < \eta < 0,
\phi_2(\mu,\eta) < \nu < \zeta_1(\mu), \Lambda(\xi) > 0 \} \;, \nonumber
\end{eqnarray}
for the functions described in \Lemm~\ref{le:expansions}
(see Fig.~\ref{fig:gsp2d}).

Then, in $\Psi_1$, $\cS$ and $\check{\cS}$-cycles are admissible
and collide in \nsf~bifurcations on $\nu = 0$
and $\eta = 0$ for $\nu > \zeta_1(\mu)$,
in $\Psi_2$, $\cS$ and $\hat{\cS}$-cycles are admissible
and collide in \nsf~bifurcations on $\eta = \phi_1(\mu,\nu)$
and $\nu = \phi_2(\mu,\eta)$ for $\eta < \zeta_2(\mu)$,
in $\Psi_3$, two $\cS$-cycles are admissible and collide in classical saddle-node
bifurcations where $\Lambda(\xi) = 0$;
the boundaries $\eta = 0$ and $\nu = \phi_2(\mu,\eta)$
correspond to border-collision persistence.
\label{th:unfold}
\end{theorem}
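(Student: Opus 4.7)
The plan is to use \Lemm~\ref{le:expansions} as given---which supplies the bifurcation surfaces $\phi_1,\phi_2$ and the saddle-node discriminant $\Lambda(\xi)$---and then in each of the three regions $\Psi_1,\Psi_2,\Psi_3$ identify the admissible cycles, match them continuously across the region boundaries, and classify each boundary bifurcation.

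First I would continue the two reference orbits. Since $I-M_{\check{\cS}}(0)$ and $I-M_{\hat{\cS}}(0)$ are nonsingular, the implicit function theorem applied to the equations $z=h^{\check{\cS}}(z;\xi)$ and $z=h^{\hat{\cS}}(z;\xi)$ at $z=y_0$, $\xi=0$ produces unique $C^{K-1}$ families $\check z^*(\xi)$ and $\hat z^*(\xi)$ that coincide with $y_0$ at $\xi=0$; the remaining iterates on each orbit are $C^{K-1}$ by composition. By hypothesis only $y_0$ and $y_{ld}$ lie on the \sw, so continuity ensures that for small $\xi$ admissibility of each reference cycle depends only on the signs of the two possibly-vanishing coordinates, $\check u_0,\check u_{ld}$ and $\hat u_0,\hat u_{ld}$. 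Using the normalizations (\ref{eq:cu_0})--(\ref{eq:cu_ld}) together with \Lemm~\ref{le:expansions}(\ref{it:phi_1})--(\ref{it:phi_2}) and the admissibility rule that $\cS_i=\sL$ (resp.\ $\sR$) requires $u_i\le 0$ (resp.\ $\ge 0$), the admissible set of the $\check{\cS}$-cycle is precisely $\eta,\nu\ge 0$, i.e.\ $\Psi_1$, and the admissible set of the $\hat{\cS}$-cycle is $\eta\le\phi_1(\mu,\nu)$ and $\nu\le\phi_2(\mu,\eta)$, i.e.\ $\Psi_2$.

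Next I would classify the boundary bifurcations and the $\cS$-cycles in $\Psi_3$. On each of the four surfaces $\eta=0$, $\nu=0$, $\eta=\phi_1(\mu,\nu)$, $\nu=\phi_2(\mu,\eta)$ one iterate of the relevant reference cycle lands on the \sw; by \Lemm~\ref{le:solvesAlso} together with the sequence identities $\check{\cS}^{\overline 0}=\cS$, $\hat{\cS}^{\overline{ld}}=\cS$ and their cyclic shifts via (\ref{eq:rss}), that orbit is simultaneously an $\cS$-cycle, so the collision of two cycles with different symbol sequences on the \sw~is a \nsf. For the $\cS$-cycles inside $\Psi_3$, \Lemm~\ref{le:expansions}(\ref{it:Lambda}) already provides two branches whenever $\Lambda(\xi)>0$, coalescing on $\Lambda=0$ in a classical saddle-node. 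These two branches are exactly the $C^{K-1}$ continuations of the $\check{\cS}$- and $\hat{\cS}$-cycles across the arcs of $\eta=0$ with $\nu<\zeta_1(\mu)$ and of $\nu=\phi_2(\mu,\eta)$ with $\eta>\zeta_2(\mu)$ into the interior of $\Psi_3$; on these arcs a single iterate of a single cycle crosses the \sw~without any collision, which is the definition of border-collision persistence. Part (\ref{it:zeta12}) of \Lemm~\ref{le:expansions} then guarantees that the saddle-node surface is tangent to $\eta=0$ at $\nu=\zeta_1(\mu)$ and to $\nu=\phi_2(\mu,\eta)$ at $\eta=\zeta_2(\mu)$, so the two $\cS$-cycles remain admissible throughout $\Psi_3$ and their admissible region is bounded exactly by these two persistence arcs and the saddle-node locus.

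The main obstacle is the bookkeeping of admissibility across six surfaces in three-dimensional parameter space: one must verify, using \Lemm~\ref{le:solvesAlso} and \Lemm~\ref{le:shrPoint} together with the sign convention (\ref{eq:muSign}), that no other coordinate $u_i$ changes sign inside any $\Psi_j$ for small $\xi$, and that the two $\cS$-branches in $\Psi_3$ actually match the $\check{\cS}$- and $\hat{\cS}$-cycles in the asserted way rather than the opposite way. The sign convention (\ref{eq:muSign}) is chosen precisely so that this matching places the resonance on the $\mu>0$ side of the \bcb; once this is checked, the remaining content of the theorem is the assembly described above.
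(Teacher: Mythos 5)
Your plan matches the paper's proof structure for $\Psi_1$ and $\Psi_2$: both use the implicit function theorem on $h^{\check{\cS}}$ and $h^{\hat{\cS}}$, reduce admissibility to the signs of the two possibly-vanishing coordinates, and invoke \Lemm~\ref{le:solvesAlso} together with the symbol-sequence identities to classify the four outer boundary arcs as \nsf~bifurcations. The place where your proposal falls short is exactly the one you flag as ``the main obstacle,'' namely the admissibility of the two $\cS$-cycles inside $\Psi_3$. You assert that the two center-manifold branches from \Lemm~\ref{le:expansions}(\ref{it:Lambda}) are the continuations of the $\check{\cS}$- and $\hat{\cS}$-cycles across the persistence arcs and that therefore ``the two $\cS$-cycles remain admissible throughout $\Psi_3$,'' but this does not follow from tangency of $\Lambda=0$ with those arcs; it requires tracking the sign of $s_{i,j}$ for \emph{every} $i$, not just $i=0$ and $i=ld$, and showing in addition that exactly one of the two branches meets the $\check{\cS}$-cycle on $\eta=0$ while the other meets the $\hat{\cS}$-cycle on $\nu=\phi_2$, rather than, say, both meeting the same reference cycle or an intermediate iterate crossing the \sw~in the interior of $\Psi_3$.

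The paper closes this gap with a specific computational device you have omitted. It observes that the eigenvector $v$ of $M_\cS(0)$ for eigenvalue $1$ must satisfy $v=\tfrac{1}{t_d}(y_d-y_0)$ (because $y_0$ and $y_d$ are both fixed points of $h^\cS$ at $\xi=0$ and the eigenvalue $1$ is simple), and then, by substituting the center-manifold parametrization $x=X(s;\xi)=sv+\mu y_0+O(2)$ and iterating, derives the interpolation formula
\begin{equation}
s_{i,j}(\xi)=\Bigl(\mu-\frac{s_{0,j}(\xi)}{t_d}\Bigr)t_i+\frac{s_{0,j}(\xi)}{t_d}\,t_{d+i}+O(2)\;.
\nonumber
\end{equation}
This formula expresses every coordinate of each $\cS$-cycle as an affine function of $s_{0,j}$ with coefficients $t_i$ and $t_{d+i}$, so the signs of \emph{all} $s_{i,j}$ for $i\ne 0,(l-1)d,ld,-d$ are pinned down by \Lemm~\ref{le:shrPoint}(\ref{it:tneighbors}), and the signs at the four critical indices are determined from the boundary data $s_{0,1}=0$ on $\eta=0$ (for $\nu\le\zeta_1$), $s_{0,2}=0$ on $\eta=0$ (for $\nu\ge\zeta_1$), and the analogous facts on the other boundary surfaces. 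Without this tool your proposal reduces to an unverified bookkeeping claim; with it, the sign-tracking in $\Psi_3$ becomes a finite computation. So the route you sketch is the right one, but the eigenvector identity and the resulting formula for $s_{i,j}$ are the missing ingredients that make the $\Psi_3$ step actually go through.
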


%%%%%%%%%%%%%%%%%%%%%%%%%%%%%%%%%%%%%%%%%%%%%%%%%%%%%%%%%%%%%
\begin{figure}[ht]
\begin{center}
\includegraphics[width=9.6cm,height=8cm]{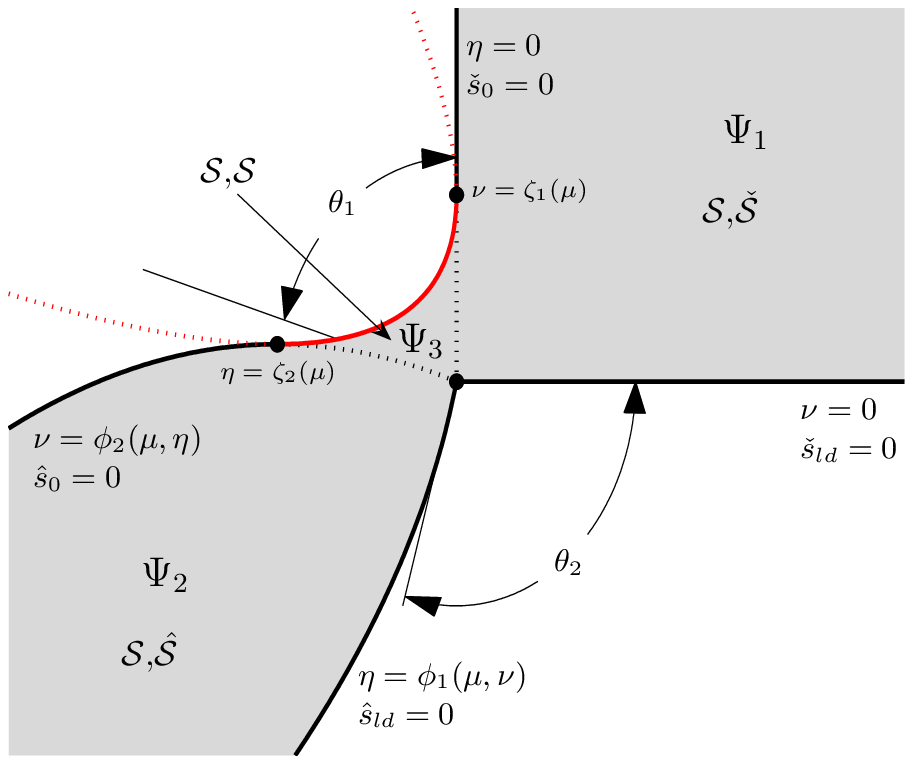}
%\setlength{\unitlength}{1cm}
%\begin{picture}(9.6,8)
%\put(3.7,5.8){\small $\theta_1$}
%\put(6,2.15){\small $\theta_2$}
%\put(8.5,3.7){\small $\nu = 0$}
%\put(5.1,7.4){\small $\eta = 0$}
%\put(.7,3.4){\small $\nu = \phi_2(\mu,\eta)$}
%\put(4,1){\small $\eta = \phi_1(\mu,\nu)$}
%\put(5.15,5.95){\scriptsize $\nu = \zeta_1(\mu)$}
%\put(2.6,4.1){\scriptsize $\eta = \zeta_2(\mu)$}
%\put(7.7,6.5){\large $\Psi_1$}
%\put(1.8,2){\large $\Psi_2$}
%\put(4.4,4.38){\large $\Psi_3$}
%\put(7.5,5.7){$\cS$,$\check{\cS}$}
%\put(1.6,1.2){$\cS$,$\hat{\cS}$}
%\put(2.4,6.1){$\cS$,$\cS$}
%\put(8.5,3.3){\small $\check{s}_{ld} = 0$}
%\put(5.1,7){\small $\check{s}_0 = 0$}
%\put(.7,3){\small $\hat{s}_0 = 0$}
%\put(4,.6){\small $\hat{s}_{ld} = 0$}
%\end{picture}
\caption{
A two-parameter bifurcation diagram for the map (\ref{eq:pwsMap})
near a generalized shrinking point for fixed, small $\mu > 0$.
As stated in \Ther~\ref{th:unfold}, an admissible $\cS$-cycle
coexists with,
(i) an admissible $\check{\cS}$-cycle in $\Psi_1$;
(ii) an admissible $\hat{\cS}$-cycle in $\Psi_2$;
(iii) a second admissible $\cS$-cycle in $\Psi_3$.
Along the solid curves 
the two coexisting, admissible, periodic solutions
undergo a \nsf~bifurcation
except on the curve
connecting the points $\nu = \zeta_1(\mu)$ and $\eta = \zeta_2(\mu)$
which corresponds to a classical saddle-node bifurcation of
the two periodic solutions.
The boundary between $\Psi_1$ and $\Psi_3$ and
the boundary between $\Psi_2$ and $\Psi_3$ correspond to
border-collision persistence.
\label{fig:gsp2d}
}
\end{center}
\end{figure}
%%%%%%%%%%%%%%%%%%%%%%%%%%%%%%%%%%%%%%%%%%%%%%%%%%%%%%%%%%%%%

%%%%%%%%%%%%%%%%%%%%%%%%%%
\section{Conclusions}
\label{sec:CONC}

We have studied resonance arising from an arbitrary
\bcb~of a \pws, continuous map.
When a periodic solution created in a \bcb~is
rotational, in the sense described in \S\ref{sec:SYMDYNS},
the corresponding resonance tongue typically 
has a sausage-like geometry, see Fig.~\ref{fig:rrEqrL_2}.
Shrinking points break apart as parameters are varied to
move away from the \bcb, Fig.~\ref{fig:breakup}.
We have proved \Ther~\ref{th:unfold} which
details the manner by which \shr~destruction occurs.
The results of the theorem are in complete agreement
with numerical results, Figs.~\ref{fig:manyBreak} and \ref{fig:breakup}.
Theorem \ref{th:unfold} does not provide
an understanding of global properties of \tong s,
for instance the observation that the majority of, or perhaps all of,
the kinks in the \tong s of Fig.~\ref{fig:manyBreak}
appear on the left sides of the tongues.
%This corresponds to $k_0$ (\ref{eq:k0}) being
%positive for every \shr~in the figure,
%but it is not clear why this should be so.

If the map (\ref{eq:pwsMap}) has an invariant circle
that intersects the \sw~at two points and the restriction of
the map to the circle is a homeomorphism,
then any periodic solution on the circle will have a
corresponding \sew~that is rotational.
Our results do not apply to periodic solutions born in
\bcb s that are non-rotational.
However such periodic solutions seem to be,
in some sense, less common \cite{SiMe08b}.
Note that we make no requirement that (\ref{eq:pwsMap}) is a homeomorphism
or has an invariant circle,
only that corresponding periodic solutions are rotational.

A limitation of \Ther~\ref{th:unfold} is that it
includes the assumption that the map (\ref{eq:pwsMap}) is piecewise-$C^K$.
Poincar\'{e} maps relating to sliding phenomena
are generically \pws, continuous
with a $\frac{3}{2}, 2, \frac{5}{2},\ldots$ type
power expansion \cite{DiKo02}
%(and an associated multiplier of $1$ on one side of the bifurcation)
and hence not apply to \Ther~\ref{th:unfold}.
It seems reasonable that in this case a similar result with
different scaling laws could apply.
%\footnote{
%In particular, the renormalized map will have a $+O(\mu^{\frac{1}{2}})$
%term instead of a $+O(\mu)$ term,
%suggesting that the points $A$ and $B$ in Fig.~\ref{fig:breakup}
%move away from $O$ like $\sqrt{\mu}$ instead of
%linearly with respect to $\mu$.
%The points $A$ and $B$ correspond to the coincidence of
%a \bcb~and a saddle-node bifurcation.
%In one dimension this scenario is unfolded by
%$x_{n+1} = \mu + (1+\eta) x_n + a x_n^2$
%for which saddle-node bifurcations occur
%along $\mu = \frac{\eta^2}{4 a}$.
%Replacing the quadratic term with a $3/2$-type term gives
%$x_{n+1} = \mu + (1+\eta) x_n + a x_n^{\frac{3}{2}}$
%for which saddle-node bifurcations occur
%along $\mu = \frac{4 \eta^3}{27 a^2}$.
%I.e.~the saddle-node locus is,
%to lowest order, cubic instead of quadratic.}
A major hurdle in the analysis is that the center manifold
theorem (applied in the proof of \Ther~\ref{th:unfold})
is not immediately applicable to non-integer power expansions.

In our definition of the codimension-three generalized
\shr~(\Defi~\ref{def:gsp}),
we require that both matrices 
$I-M_{\check{\cS}}$ and $I-M_{\hat{\cS}}$ are nonsingular at $\xi = 0$.
We then prove \Lemm~\ref{le:expansions}
by computing parameter-dependent series expansions
(see Appendix \ref{sec:PROOFS}).
%\footnote{
%The proof of
%Lemma \ref{le:expansions} doesn't use the fact
%that $\{ y_i \}$ is admissible (except in applying Lemma \ref{le:shrPoint}).
%The proof of Lemma \ref{le:shrPoint} (given in \cite{SiMe09})
%relies heavily on the fact that $\{ y_i \}$ is admissible.
%However I have since discovered how to prove that
%$y_d \ne y_0$ without applying admissibility
%(the proof requires showing that equality would
%lead to $y_{d+k} = y_k$, $\forall k$
%by induction and cases)
%Part (\ref{it:yiperiodn}) follows since  ${\rm gcd}(d,n) = 1$.
%Without admissibility only the signs in (\ref{it:tneighbors})
%remain undetermined.}.
However part (\ref{it:ddFormula}) of \Lemm~\ref{le:expansions}
is independent of the parameters
and it seems there should be a more direct proof of this result.
This problem remains for future work.
Related problems that remain to be fully understood include
the persistence of invariant topological circles created at \bcb s and
the simultaneous occurrence of a \bcb~with a
classical Neimark-Sacker bifurcation.

%%%%%%%%%%%%%%%%%%%%%%%%%%
\appendix

\section{Proofs for Section \ref{sec:UNFOLD}}
\label{sec:PROOFS}

\begin{proof}[Proof of \Lemm~\ref{le:expansions}]
We divide the proof into four major steps.
In the first step we use the formulas for
$\check{u}_0(\xi)$ (\ref{eq:cu_0}) and
$\check{u}_{ld}(\xi)$ (\ref{eq:cu_ld}) to
derive formulas for $\hat{u}_0$ and $\hat{u}_{ld}$,
enabling us to compute the border-collision boundaries
$\eta = \phi_1(\mu,\nu)$ and
$\nu = \phi_2(\mu,\eta)$.
The method used is an extension of \Lemm~3 of \cite{SiMe09} to
account for the nonlinear terms in (\ref{eq:pwsMap}).
In step 2 we continue this methodology to derive identities
relating coefficients, verify (\ref{it:ddFormula}) of the lemma
and refine the formulas for $\phi_1$ and $\phi_2$.
In the third step we compute the 
one-dimensional center manifold of $f^\cS$
to identify saddle-node bifurcations on the surface $\Lambda = 0$.
Lastly, in step 4 we study at the intersections
of $\Lambda = 0$ with $\eta = 0$ and $\nu = \phi_2(\mu,\eta)$
to complete the proof.
For convenience we write
\begin{equation}
\det(I-M_{\cS}(0,\eta,\nu)) = k_1 \eta + k_2 \nu + O(2) \;.
\label{eq:k1k2}
\end{equation}

{\bf Step 1:} {\em Derive formulas for $\hat{u}_0$ and $\hat{u}_{ld}$
and define $\phi_1$ and $\phi_2$.}\\
Periodic solutions of (\ref{eq:pwsMap}) with 
associated symbol sequences that differ by a single symbol
may be related algebraically (as shown below).
However the sequences $\check{\cS}$ and $\hat{\cS}$ differ in two symbols,
so in general we cannot effectively compare the
$\check{\cS}$ and $\hat{\cS}$-cycles.
But if one of the four important quantities,
$\check{u}_0$, $\check{u}_{ld}$, $\hat{u}_0$ and $\hat{u}_{ld}$ is zero,
then one of the two $n$-cycles is also an $\cS$-cycle
and the two $n$-cycles may indeed be effectively related.
The four curves which make up the edges of a
\tong~near a \shr~correspond
to where these four quantities are zero.
We separate this step of the proof by where
different assumptions on the parameters are made.

{\bf a)} {\em Suppose $\eta = 0$.}
Then by (\ref{eq:cu_0}), $\check{u}_0 = 0$.
Thus the $\check{\cS}$-cycle, $\{ \check{x}_i \}$, is also an $\cS$-cycle.
So each $\check{x}_i$ is a fixed point of $f^{\cS^{(i)}}$.
In particular, $\check{x}_{ld}$ is a fixed point of $f^{\cS^{(ld)}}$,
which in the renormalized frame (\ref{eq:renormMap})
says that $\check{z}_{ld}$ is a fixed point of $h^{\cS^{(ld)}}$:
\begin{equation}
\check{z}_{ld}(\mu,0,\nu) =
h^{\cS^{(ld)}}(\check{z}_{ld}(\mu,0,\nu);\mu,0,\nu) \;.
\nonumber
\end{equation}
Expressing $h^{\cS^{(ld)}}$ as a Taylor series centered at $y_{ld}$
(\ref{eq:yidef}) and evaluated at $\check{z}_{ld}$ yields
\begin{equation}
\check{z}_{ld}(\mu,0,\nu) = h^{\cS^{(ld)}}(y_{ld};\mu,0,\nu)
+ D_z h^{\cS^{(ld)}}(y_{ld};\mu,0,\nu)
\big( \check{z}_{ld}(\mu,0,\nu) - y_{ld} \big) + O(3) \;,
\label{eq:exp_1a}
\end{equation}
where the next term in the expansion is
$O(\mu) O(|\check{z}_{ld}(\mu,0,\nu) - y_{ld}|^2)$ which is $O(3)$
because $\check{z}_{ld}(0,0,0) = y_{ld}$.

Now, $\hat{z}_{ld}$ is a fixed point of $h^{\hat{\cS}^{(ld)}}$
(for any sufficiently small $\xi$), therefore
\begin{equation}
\hat{z}_{ld}(\mu,0,\nu) = h^{\hat{\cS}^{(ld)}}(y_{ld};\mu,0,\nu)
+ D_z h^{\hat{\cS}^{(ld)}}(y_{ld};\mu,0,\nu)
\big( \hat{z}_{ld}(\mu,0,\nu) - y_{ld} \big) + O(3) \;,
\label{eq:exp_1b}
\end{equation}
where the right-hand side is the Taylor series
of $h^{\hat{\cS}^{(ld)}}$ centered at $y_{ld}$.
But $\cS^{(ld)} = \hat{\cS}^{(ld) \overline{0}}$
(refer to \S\ref{sec:SYMDYNS}),
thus whenever $u = e_1^{\sf T} z = 0$, we have
$h^{\cS^{(ld)}}(z;\xi) = h^{\hat{\cS}^{(ld)}}(z;\xi)$
(by continuity of (\ref{eq:pwsMap})).
Since $t_{ld} = e_1^{\sf T} y_{ld} = 0$
(\Lemm~\ref{le:shrPoint}(\ref{it:t0tld})),
we have
\begin{equation}
h^{\cS^{(ld)}}(y_{ld};\mu,0,\nu) = h^{\hat{\cS}^{(ld)}}(y_{ld};\mu,0,\nu) \;.
\label{eq:exp_1c}
\end{equation}
Furthermore, although in general the last $N-1$ columns of
$D_z h^{\cS^{(ld)}}$ and $D_z h^{\hat{\cS}^{(ld)}}$ are not equal,
since $h^{\cS^{(ld)}} \equiv h^{\hat{\cS}^{(ld)}}$ on the \sw,
the last $N-1$ columns of
$D_z h^{\cS^{(ld)}}(y_{ld};\mu,0,\nu)$ and
$D_z h^{\hat{\cS}^{(ld)}}(y_{ld};\mu,0,\nu)$ are indeed equal.
Consequently the first row of their adjugates is the same:
\begin{equation}
e_1^{\sf T} {\rm adj}(I - D_z h^{\cS^{(ld)}}(y_{ld};\mu,0,\nu)) =
e_1^{\sf T} {\rm adj}(I - D_z h^{\hat{\cS}^{(ld)}}(y_{ld};\mu,0,\nu)) \;.
\label{eq:exp_rhoT_1}
\end{equation}
The combination of (\ref{eq:exp_1a}), (\ref{eq:exp_1b}),
(\ref{eq:exp_1c}) and $t_{ld} = 0$ produces
\begin{equation}
\big( I - D_z h^{\cS^{(ld)}}(y_{ld};\mu,0,\nu) \big) \check{z}_{ld}(\mu,0,\nu) =
\big( I - D_z h^{\hat{\cS}^{(ld)}}(y_{ld};\mu,0,\nu) \big)
\hat{z}_{ld}(\mu,0,\nu) + O(3) \;,
\label{eq:exp_comb_1}
\end{equation}
and multiplication of both sides of (\ref{eq:exp_comb_1}) by (\ref{eq:exp_rhoT_1})
on the left (remembering (\ref{eq:adjRelation}) and
$u_i = e_1^{\sf T} z_i$) leads to
\begin{equation}
\det \big( I - D_z h^{\cS^{(ld)}}(y_{ld};\mu,0,\nu) \big) \check{u}_{ld}(\mu,0,\nu) =
\det \big( I - D_z h^{\hat{\cS}^{(ld)}}(y_{ld};\mu,0,\nu) \big) \hat{u}_{ld}(\mu,0,\nu) +
O(3) \;.
\nonumber
\end{equation}
We now plug in known expansions
($\check{u}_{ld}$ is given by (\ref{eq:cu_ld}),
$\det(I - D_z h^{\hat{\cS}^{(ld)}}(y_{ld};\mu,0,\nu)) = \hat{\delta} + O(1)$
by (\ref{eq:dHat}) and \Lemm~\ref{le:cyclicDetDx},
$\det(I - D_z h^{\cS^{(ld)}}(y_{ld};\mu,0,\nu)) = k_0 \mu + k_2 \nu + O(2)$
by (\ref{eq:k0}), (\ref{eq:k1k2}) and \Lemm~\ref{le:cyclicDetDx}):
\begin{equation}
(k_0 \mu + k_2 \nu + O(2)) \nu (1 + O(1)) =
(\hat{\delta} + O(1)) \hat{u}_{ld}(\mu,0,\nu) \;,
\nonumber
\end{equation}
which, upon rearranging, produces the following useful expression:
\begin{equation}
\hat{u}_{ld}(\mu,0,\nu) = \frac{k_0}{\hat{\delta}} \mu \nu +
\frac{k_2}{\hat{\delta}} \nu^2 + O(3) \;.
\label{eq:exp_res_1}
\end{equation}
Below we use the same approach to derive
further expressions for $\hat{u}_0$ and $\hat{u}_{ld}$
with different assumptions on the parameters.
For brevity we will not provide the same level of detail.

{\bf b)} {\em Suppose $\eta = 0$ and $\mu = 0$.}
As above, since $\eta = 0$, the $\check{\cS}$-cycle is also an $\cS$-cycle.
In particular, $\check{z}_{-d}$ is a fixed point of $h^{\cS^{(-d)}}$.
Here we express $h^{\cS^{(-d)}}$ as a Taylor series centered at $y_0$
(the reason for choosing $y_0$ will soon become clear)
and evaluated at $\check{z}_{-d}$:
\begin{equation}
\check{z}_{-d}(0,0,\nu) = h^{\cS^{(-d)}}(y_0;0,0,\nu)
+ D_z h^{\cS^{(-d)}}(y_0;0,0,\nu)
\big( \check{z}_{-d}(0,0,\nu) - y_0 \big) \;,
\label{eq:exp_2a}
\end{equation}
where, unlike in (\ref{eq:exp_1a}),
there is no next term in the expansion when $\mu = 0$
the map is affine.
Now, $\hat{z}_0$ is a fixed point of $h^{\hat{\cS}}$, therefore
\begin{equation}
\hat{z}_0(0,0,\nu) = h^{\hat{\cS}}(y_0;0,0,\nu)
+ D_z h^{\hat{\cS}}(y_0;0,0,\nu)
\big( \hat{z}_0(0,0,\nu) - y_0 \big) \;.
\label{eq:exp_2b}
\end{equation}
But $\cS^{(-d)} = \hat{\cS}^{\overline{0}}$ (by (\ref{eq:rss})),
and since $t_0 = 0$ (\Lemm~\ref{le:shrPoint}(\ref{it:t0tld}))
we may perform the same simplification that we did above to combine
(\ref{eq:exp_2a}) and (\ref{eq:exp_2b}) leaving
\begin{eqnarray}
\big( I - D_z h^{\cS^{(-d)}}(y_0;0,0,\nu) \big) \check{z}_{-d}(0,0,\nu) & = &
\big( I - D_z h^{\hat{\cS}}(y_0;0,0,\nu) \big) \hat{z}_0(0,0,\nu) \;, \nonumber \\
\Rightarrow~~~~
\det \big( I - D_z h^{\cS^{(-d)}}(y_0;0,0,\nu) \big) \check{u}_{-d}(0,0,\nu) & = &
\det \big( I - D_z h^{\hat{\cS}}(y_0;0,0,\nu) \big) \hat{u}_0(0,0,\nu)
\;, \nonumber \\
\Rightarrow~~~~
(k_2 \nu + O(\nu^2))(t_{-d} + O(\nu)) & = &
(\hat{\delta} + O(\nu)) \hat{u}_0(0,0,\nu) \;, \nonumber \\
\Rightarrow~~~~
\hat{u}_0(0,0,\nu) & = &
\frac{k_2 t_{-d}}{\hat{\delta}} \nu + O(\nu^2) \;. \label{eq:exp_res_2}
\end{eqnarray}

{\bf c)} {\em Suppose $\nu = 0$.}
As discussed in \S\ref{sec:UNFOLD},
when $\nu = 0$ the $\check{\cS}$-cycle is also an $\cS^{(-d)}$-cycle.
Therefore $\check{z}_0$ is a fixed point of $h^{\cS^{(-d)}}$ and so
\begin{equation}
\check{z}_0(\mu,\eta,0) = h^{\cS^{(-d)}}(y_0;\mu,\eta,0)
+ D_z h^{\cS^{(-d)}}(y_0;\mu,\eta,0)
\big( \check{z}_0(\mu,\eta,0) - y_0 \big) + O(3) \;.
\label{eq:exp_3a}
\end{equation}
Also, $\hat{z}_0$ is a fixed point of $h^{\hat{\cS}}$, thus
\begin{equation}
\hat{z}_0(\mu,\eta,0) = h^{\hat{\cS}}(y_0;\mu,\eta,0)
+ D_z h^{\hat{\cS}}(y_0;\mu,\eta,0)
\big( \hat{z}_0(\mu,\eta,0) - y_0 \big) + O(3) \;.
\label{eq:exp_3b}
\end{equation}
Combining (\ref{eq:exp_3a}) and (\ref{eq:exp_3b}) leads to
(since $\cS^{(-d)} = \hat{\cS}^{\overline{0}}$)
\begin{equation}
\det \big( I - D_z h^{\cS^{(-d)}}(y_0;\mu,\eta,0) \big) \check{u}_0(\mu,\eta,0) =
\det \big( I - D_z h^{\hat{\cS}}(y_0;\mu,\eta,0) \big) \hat{u}_0(\mu,\eta,0) +
O(3) \;.
\nonumber
\end{equation}
Take care to note that
\begin{equation}
\tilde{k_0} \equiv \frac{\partial}{\partial \mu}
\det(I - D_x f^\cS(\check{x}_d(\xi);\xi)) \Big|_{\xi = 0} \;,
\label{eq:m2}
\end{equation}
is different to (\ref{eq:m1}) due to the presence of nonlinear terms
in (\ref{eq:pwsMap}) (below we will show that in fact $\tilde{k_0} = -k_0$).
Consequently
\begin{eqnarray}
(\tilde{k_0} \mu + k_1 \eta + O(2)) \eta (1 + O(1)) & = &
(\hat{\delta} + O(1)) \hat{u}_0(\mu,\eta,0) \;, \nonumber \\
\Rightarrow~~~~
\hat{u}_0(\mu,\eta,0) & = & \frac{\tilde{k_0}}{\hat{\delta}} \mu \eta +
\frac{k_1}{\hat{\delta}} \eta^2 + O(3) \;. \label{eq:exp_res_3}
\end{eqnarray}

{\bf d)} {\em Suppose $\nu = 0$ and $\mu = 0$.}
Here $\check{z}_{(l+1)d}$ is a fixed point of $h^{\cS^{(ld)}}$, so
\begin{equation}
\check{z}_{(l+1)d}(0,\eta,0) = h^{\cS^{(ld)}}(y_{ld};0,\eta,0)
+ D_z h^{\cS^{(ld)}}(y_{ld};0,\eta,0)
\big( \check{z}_{(l+1)d}(0,\eta,0) - y_{ld} \big) \;,
\label{eq:exp_4a}
\end{equation}
and $\hat{z}_{ld}$ is a fixed point of $h^{\hat{\cS}^{(ld)}}$, so
\begin{equation}
\hat{z}_{ld}(0,\eta,0) = h^{\hat{\cS}^{(ld)}}(y_{ld};0,\eta,0)
+ D_z h^{\hat{\cS}^{(ld)}}(y_{ld};0,\eta,0)
\big( \hat{z}_{ld}(0,\eta,0) - y_{ld} \big) \;,
\label{eq:exp_4b}
\end{equation}
and since $\cS^{(ld)} = \hat{\cS}^{(ld) \overline{0}}$ we obtain
\begin{eqnarray}
\det \big( I - D_z h^{\cS^{(ld)}}(y_{ld};0,\eta,0) \big) \check{u}_{(l+1)d}(0,\eta,0) & = &
\det \big( I - D_z h^{\hat{\cS}^{(ld)}}(y_{ld};0,\eta,0) \big) \hat{u}_{ld}(0,\eta,0) \;, \nonumber \\
\Rightarrow~~~~
(k_1 \eta + O(\eta^2))(t_{(l+1)d} + O(\eta)) & = &
(\hat{\delta} + O(\eta)) \hat{u}_{ld}(0,\eta,0) \;, \nonumber \\
\Rightarrow~~~~
\hat{u}_{ld}(0,\eta,0) & = &
\frac{k_1 t_{(l+1)d}}{\hat{\delta}} \eta + O(\eta^2) \;. \label{eq:exp_res_4}
\end{eqnarray}

We now apply the implicit function theorem to the $C^{K-1}$
function $\hat{u}_{ld}(\xi)$.
By (\ref{eq:exp_res_1}) and (\ref{eq:exp_res_4}),
there exists a unique $C^{K-1}$ function $\phi_1$
such that for small $\mu$ and $\nu$,
$\hat{u}_{ld}(\mu,\phi_1(\mu,\nu),\nu) = 0$ and
\begin{equation}
\phi_1(\mu,\nu) = -\frac{k_0}{k_1 t_{(l+1)d}} \mu \nu
- \frac{k_2}{k_1 t_{(l+1)d}} \nu^2 + O(3) \;.
\label{eq:phi_1_temp}
\end{equation}
Recall that $\check{u}_{ld} = 0$ along the $\mu$-axis
(a consequence of (\ref{eq:cu_0}) and (\ref{eq:cu_ld})),
therefore $\phi_1 = 0$ whenever $\nu = 0$.
Thus we may rewrite (\ref{eq:phi_1_temp}) as
\begin{equation}
\phi_1(\mu,\nu) = \nu \left( -\frac{k_0}{k_1 t_{(l+1)d}} \mu
- \frac{k_2}{k_1 t_{(l+1)d}} \nu + O(2) \right) \;.
\label{eq:phi_1}
\end{equation}
Similarly by (\ref{eq:exp_res_2}) and (\ref{eq:exp_res_3}),
there exists a unique $C^{K-1}$ function $\phi_2$
such that for small $\mu$ and $\eta$,
$\hat{u}_0(\mu,\eta,\phi_2(\mu,\eta)) = 0$ and
\begin{equation}
\phi_2(\mu,\eta) = \eta \left( -\frac{\tilde{k_0}}{k_2 t_{-d}} \mu
- \frac{k_1}{k_2 t_{-d}} \eta + O(2) \right) \;,
\label{eq:phi_2}
\end{equation}
where the $\eta$ may be factored in the same fashion as for (\ref{eq:phi_1}).

{\bf Step 2:} {\em Verify (\ref{it:ddFormula})
of the lemma and refine the formulas for $\phi_1$ and $\phi_2$,
(\ref{eq:phi_1}) and (\ref{eq:phi_2}).}\\
Here we continue to employ the methodology above
to compare the $\check{\cS}$ and $\hat{\cS}$-cycles.

{\bf a)} {\em Suppose $\eta = \phi_1(\mu,\nu)$.}
Then $\hat{u}_{ld} = 0$,
and so the $\hat{\cS}$-cycle is also an $\cS$-cycle.
Thus, in particular, $\hat{z}_0$ is a fixed point of $h^\cS$:
\begin{eqnarray}
\hat{z}_0(\mu,\phi_1(\mu,\nu),\nu)) & = &
h^\cS(y_0;\mu,\phi_1(\mu,\nu),\nu) \nonumber \\
& & +~D_z h^\cS(y_0;\mu,\phi_1(\mu,\nu),\nu)
\big( \hat{z}_0(\mu,\phi_1(\mu,\nu),\nu) - y_0 \big)
+ O(3) \;. \nonumber
%\label{eq:exp_5a}
\end{eqnarray}
Also $\check{z}_0$ is a fixed point of $h^{\check{\cS}}$:
\begin{eqnarray}
\check{z}_0(\mu,\phi_1(\mu,\nu),\nu)) & = &
h^{\check{\cS}}(y_0;\mu,\phi_1(\mu,\nu),\nu) \nonumber \\
& & +~D_z h^{\check{\cS}}(y_0;\mu,\phi_1(\mu,\nu),\nu)
\big( \check{z}_0(\mu,\phi_1(\mu,\nu),\nu) - y_0 \big) + O(3) \;. \nonumber
%\label{eq:exp_5b}
\end{eqnarray}
Then, since $\cS = \check{\cS}^{\overline{0}}$,
\begin{eqnarray}
& \det \big( I - D_z h^\cS(y_0;\mu,\phi_1(\mu,\nu),\nu)) \big)
\hat{u}_0(\mu,\phi_1(\mu,\nu),\nu)) = & \nonumber \\
& \det \big( I - D_z h^{\check{\cS}}(y_0;\mu,\phi_1(\mu,\nu),\nu)) \big)
\check{u}_0(\mu,\phi_1(\mu,\nu),\nu)) + O(3) \;. &
\label{eq:exp_5det}
\end{eqnarray}
Unlike for similar expressions in Step 1,
we have already determined an expansion for each of the four components
of (\ref{eq:exp_5det})
(in particular $\hat{u}_0(\mu,\phi_1(\mu,\nu),\nu)$
is given by (\ref{eq:exp_res_2}), (\ref{eq:exp_res_3}) and (\ref{eq:phi_1}) and
$\check{u}_0(\mu,\phi_1(\mu,\nu),\nu)$
is given by (\ref{eq:cu_0}) and (\ref{eq:phi_1}),
also recall \Lemm~\ref{le:cyclicDetDx}):
\begin{eqnarray}
& (k_0 \mu + k_2 \nu + O(2)) \left( \frac{k_2 t_{-d}}{\hat{\delta}} \nu +
O(2) \right) = & \nonumber \\
& (\check{\delta} + O(1)) \left( -\frac{k_0}{k_1 t_{(l+1)d}} \mu \nu
- \frac{k_2}{k_1 t_{(l+1)d}} \nu^2 + O(3) \right) + O(3) \;. & \nonumber
%\Rightarrow~~~~
%\frac{k_0 k_2 t_{-d}}{\hat{\delta}} \mu \nu +
%\frac{k_2^2 t_{-d}}{\hat{\delta}} \nu^2 & = &
%-\frac{k_0 \check{\delta}}{k_1 t_{(l+1)d}} \mu \nu
%-\frac{k_2 \check{\delta}}{k_1 t_{(l+1)d}} \nu^2 + O(3) \;. \nonumber
\end{eqnarray}
Equating the second-order coefficients produces
\begin{equation}
k_1 k_2 t_{(l+1)d} t_{-d} = -\check{\delta} \hat{\delta} \;.
\label{eq:exp_res_5}
\end{equation}
%where the $k_0$ in each term has been cancelled\footnote{
%I can do this because by assumption $k_0 \ne 0$.}.
%Equating the two $\nu^2$ terms also produces (\ref{eq:exp_res_5})\footnote{
%This seems odd. Is there a reason for this?}.

{\bf b)} {\em Suppose $\eta = \phi_1(\mu,\nu)$ and $\mu = 0$.}
In the same manner as above,
equating the first components of the power series of 
$h^{\cS^{((l-1)d)}}$ and $h^{\check{\cS}^{(ld)}}$ yields
%Then $\hat{z}_{(l-1)d}$ is a fixed point of $h^{\cS^{((l-1)d)}}$, thus
%\begin{eqnarray}
%\hat{z}_{(l-1)d}(0,\phi_1(0,\nu),\nu)) & = &
%h^{\cS^{((l-1)d)}}(y_{ld};0,\phi_1(0,\nu),\nu) \nonumber \\
%& & +~D_z h^{\cS^{((l-1)d)}}(y_{ld};0,\phi_1(0,\nu),\nu)
%\big( \hat{z}_{(l-1)d}(0,\phi_1(0,\nu),\nu) - y_{ld} \big) \;,
%\label{eq:exp_6a}
%\end{eqnarray}
%and, in particular, $\check{z}_{ld}$ is a fixed point of $h^{\check{\cS}^{(ld)}}$, hence
%\begin{eqnarray}
%\check{z}_{ld}(0,\phi_1(0,\nu),\nu)) & = &
%h^{\check{\cS}^{(ld)}}(y_{ld};0,\phi_1(0,\nu),\nu) \nonumber \\
%& & +~D_z h^{\check{\cS}^{(ld)}}(y_{ld};0,\phi_1(0,\nu),\nu)
%\big( \check{z}_{ld}(0,\phi_1(0,\nu),\nu) - y_{ld} \big) \;,
%\label{eq:exp_6b}
%\end{eqnarray}
\begin{eqnarray}
%& \Rightarrow~~~~
& \det(I - D_z h^{\cS^{((l-1)d)}}(y_{ld};0,\phi_1(0,\nu),\nu)
\hat{u}_{(l-1)d}(0,\phi_1(0,\nu),\nu) = & \nonumber \\
& \det(I - D_z h^{\check{\cS}^{(ld)}}(y_{ld};0,\phi_1(0,\nu),\nu)
\check{u}_{ld}(0,\phi_1(0,\nu),\nu) \;, & \nonumber
\end{eqnarray}
and since $\cS^{((l-1)d)} = \check{\cS}^{(ld) \overline{0}}$,
\begin{eqnarray}
(k_2 \nu + O(\nu^2))(t_{(l-1)d} + O(\nu)) & = &
(\check{\delta} + O(\nu))(\nu + O(\nu^2)) \;, \nonumber \\
%\Rightarrow~~~~
%k_2 t_{(l-1)d} \nu & = & \check{\delta} \nu + O(\nu^2) \;, \nonumber \\
\Rightarrow~~~~
k_2 & = & \frac{\check{\delta}}{t_{(l-1)d}} \;. \label{eq:exp_res_6}
\end{eqnarray}

{\bf c)} {\em Suppose $\nu = \phi_2(\mu,\eta)$ and $\mu = 0$.}
Here, equating the first components of $h^\cS$ and $h^{\check{\cS}}$ produces
%Here $\hat{u}_0 = 0$
%thus $\hat{z}_d$ is a fixed point of $h^\cS$, i.e.
%\begin{eqnarray}
%\hat{z}_d(0,\eta,\phi_2(0,\eta)) & = &
%h^\cS(y_0;0,\eta,\phi_2(0,\eta)) \nonumber \\
%& & +~D_z h^\cS(y_0;0,\eta,\phi_2(0,\eta))
%\big( \hat{z}_d(0,\eta,\phi_2(0,\eta)) - y_0 \big) \;,
%\label{eq:exp_7a}
%\end{eqnarray}
%and $\check{z}_0$ is a fixed point of $h^{\check{\cS}}$, i.e.
%\begin{eqnarray}
%\check{z}_0(0,\eta,\phi_2(0,\eta)) & = &
%h^{\check{\cS}}(y_0;0,\eta,\phi_2(0,\eta)) \nonumber \\
%& & +~D_z h^{\check{\cS}}(y_0;0,\eta,\phi_2(0,\eta))
%\big( \check{z}_0(0,\eta,\phi_2(0,\eta)) - y_0 \big) \;,
%\label{eq:exp_7b}
%\end{eqnarray}
\begin{eqnarray}
%& \Rightarrow~~~~
& \det(I - D_z h^\cS(y_0;0,\eta,\phi_2(0,\eta))
\hat{u}_d(0,\eta,\phi_2(0,\eta)) = & \nonumber \\
& \det(I - D_z h^{\check{\cS}}(y_0;0,\eta,\phi_2(0,\eta))
\check{u}_0(0,\eta,\phi_2(0,\eta)) \;, & \nonumber
\end{eqnarray}
from which it follows that
\begin{eqnarray}
%\Rightarrow~~~~
(k_1 \eta + O(\eta^2))(t_d + O(\eta)) & = &
(\check{\delta} + O(\eta))(\eta + O(\eta^2)) \;, \nonumber \\
%\Rightarrow~~~~
%k_1 t_d \eta & = & \check{\delta} \eta + O(\eta^2) \;, \nonumber \\
\Rightarrow~~~~
k_1 & = & \frac{\check{\delta}}{t_d} \;. \label{eq:exp_res_7}
\end{eqnarray}

We now combine above equations to demonstrate
some parts of the lemma.
By combining (\ref{eq:exp_res_5}), (\ref{eq:exp_res_6}) and
(\ref{eq:exp_res_7}) we obtain (\ref{it:ddFormula}).
Combining (\ref{eq:phi_1}), (\ref{eq:exp_res_6}) and (\ref{eq:exp_res_7})
verifies (\ref{it:phi_1}).
Combining (\ref{eq:phi_2}), (\ref{eq:exp_res_6}) and (\ref{eq:exp_res_7})
will verify (\ref{it:phi_2}) once
it is shown that $\tilde{k_0} = -k_0$
%(\ref{eq:k0relation})
(see below).

{\bf Step 3:} {\em Derive and analyze the one-dimensional center manifold
of $f^\cS$ to obtain the function $\Lambda(\xi)$ and
identify saddle-node bifurcations of $\cS$-cycles.}\\
%Consider the map $f^\cS(x;\xi)$.
When $\xi \equiv (\mu,\eta,\nu) = 0$, $x = 0$ is a fixed point of $f^\cS(x;\xi)$
and the associated stability multipliers are
the eigenvalues of $D_x f^\cS(0;0) = M_\cS(0)$.
The matrix $M_\cS(0)$ has an eigenvalue 1
of algebraic multiplicity one.
Let $v \in \mathbb{R}^N$ be the associated eigenvector, i.e. $M_\cS(0) v = v$.
Notice $v \ne 0$ implies $e_1^{\sf T} v \ne 0$ since if not then
$M_{\cS^{\overline{0}}}(0) v = v$ which contradicts the assumption that
$I - M_{\check{\cS}}(0)$ is nonsingular (\Defi~\ref{def:gsp}).
In what follows we assume $e_1^{\sf T} v = 1$.

We now compute the restriction of $f^\cS$
to the one-dimensional center manifold.
Let
\begin{equation}
F(x;\xi) = \left[ \begin{array}{c}
f^\cS(x;\xi) \\
\xi
\end{array} \right] \;,
\nonumber
\end{equation}
denote the $(N+3)$-dimensional, $C^K$, extended map.
The Jacobian,
\begin{equation}
D F(0;0) = \left[ \begin{array}{ccccc}
M_\cS(0) & \Bigg| & P_\cS(0) b(0) & 0 & 0 \\
\hline
0 & \Bigg| & & I &
\end{array} \right] \;,
%\label{eq:DFproof}
\end{equation}
has a four-dimensional centerspace, $E^c \in \mathbb{R}^{N+3}$, spanned by
\begin{equation}
\left\{
\left[ \begin{array}{c} v \\ \hline 0 \\ 0 \\ 0 \end{array} \right] \;,
\left[ \begin{array}{c} y_0 \\ \hline 1 \\ 0 \\ 0 \end{array} \right] \;,
\left[ \begin{array}{c} 0 \\ \hline 0 \\ 1 \\ 0 \end{array} \right] \;,
\left[ \begin{array}{c} 0 \\ \hline 0 \\ 0 \\ 1 \end{array} \right]
\right\} \;, \nonumber
\end{equation}
since, in particular,
$y_0 = h^\cS(y_0;0) = M_\cS(0) y_0 + P_\cS(0) b(0)$.

Since $e_1^{\sf T} v = 1$, we may use the center manifold theorem
to express the local center manifold, $W^c$, of $f^\cS$,
in terms of $s = e_1^{\sf T} x$ and $\xi$.
In particular, on $W^c$,
\begin{equation}
x = X(s;\xi) = s v + \mu y_0 + O(2) \;,
\label{eq:Hproof}
\end{equation}
where $X : \mathbb{R} \times \mathbb{R}^3 \to \mathbb{R}^N$ is $C^{K-1}$.
The first component of the restriction of $f^\cS$ to $W^c$ is given by
\begin{eqnarray}
s'(s;\xi) & = & e_1^{\sf T} f^\cS(X(s;\xi);\xi) \nonumber \\
& = & e_1^{\sf T} \mu P_\cS(0) b(0) +
e_1^{\sf T} M_\cS(0) ( s v + \mu y_0 ) + O(2) \nonumber \\
& = & e_1^{\sf T} (P_\cS(0) b(0) + M_\cS(0) y_0) \mu +
e_1^{\sf T} M_\cS(0) v s + O(2) \nonumber \\
& = & s + O(2) \;, \nonumber
\end{eqnarray}
since $e_1^{\sf T} (P_\cS(0) b(0) + M_\cS(0) y_0) = e_1^{\sf T} y_0 = t_0 = 0$
and $e_1^{\sf T} M_\cS(0) v = e_1^{\sf T} v = 1$.
However we require the knowledge of second order terms of
$s'(s;\xi)$, so write
\begin{equation}
s'(s;\xi) = s + c_1 s^2 + c_2 \mu s + c_3 \eta s + c_4 \nu s
+ c_5 \mu^2 + c_6 \mu \eta + c_7 \mu \nu
+ c_8 \eta^2 + c_9 \eta \nu + c_{10} \nu^2 + O(3) \;.
\label{eq:sPrime1}
\end{equation}
We now utilize known properties of $f^\cS$ to determine
an expression for the majority of the
coefficients, $c_i$.
\begin{enumerate}
\renewcommand{\labelenumi}{\bf \arabic{enumi})}
\item
When $\mu = 0$, $x = 0$ is a fixed point of $f^\cS$,
thus $s'(0;0,\eta,\nu) = 0$,
hence $c_8 = c_9 = c_{10} = 0$.
\item
When $\eta = 0$, $x = \check{x}_0$ is a fixed point of $f^\cS$
and since here $s = \check{s}_0 = 0$, we have
$s'(0;\mu,0,\nu) = 0$,
hence $c_5 = c_7 = 0$.
\item
Similarly when $\nu = 0$, $x = \check{x}_d$ is a fixed point of $f^\cS$.
Here $s = \check{s}_d = \check{u}_d \mu = t_d \mu + O(2)$, thus
$s'(\check{u}_d \mu;\mu,\eta,0) = \check{u}_d \mu$, leading to
\begin{equation}
c_1 = -\frac{c_2}{t_d} \;, \qquad
c_6 = -c_3 t_d \;. \nonumber
\end{equation}
\end{enumerate}
Thus we have reduced the center manifold map (\ref{eq:sPrime1}) to
\begin{equation}
s'(s;\xi) = s - \frac{c_2}{t_d} s^2 + c_2 \mu s + c_3 \eta s + c_4 \nu s
- c_3 t_d \mu \eta + O(3) \;.
\label{eq:sPrime2}
\end{equation}
\begin{enumerate}
\renewcommand{\labelenumi}{\bf \arabic{enumi})}
\addtocounter{enumi}{3}
\item
%When $\mu = 0$, $x = 0$ is a fixed point of $f^\cS$.
%Associated stability multipliers are eigenvalues of $D_x f^\cS(0;0,\eta,\nu)$.
Let $\lambda(\eta,\nu)$ be the eigenvalue of $D_x f^\cS(0;0,\eta,\nu)$,
$C^{K-1}$ dependent on $\eta$ and $\nu$, for which $\lambda(0,0) = 1$.
This eigenvalue is also the stability multiplier
of the fixed point, $s = 0$, of $s'(s;0,\eta,\nu)$,
that is, by (\ref{eq:sPrime2})
\begin{equation}
\lambda(\eta,\nu) =
\frac{\partial s'}{\partial s}(0;0,\eta,\nu) =
1 + c_3 \eta + c_4 \nu + O(2) \;.
\label{eq:lamstabmult}
\end{equation}
Now, $\det(I - D_x f^\cS(0;0,\eta,\nu))$ is equal to the product of all
$N$ eigenvalues (counting algebraic multiplicity) of
$I - D_x f^\cS(0;0,\eta,\nu)$.
Thus
\begin{equation}
\det(I - D_x f^\cS(0;0,\eta,\nu)) = (1 - \lambda(\eta,\nu)) \mathcal{P}(\eta,\nu) \;, \nonumber
\end{equation}
where $\mathcal{P}(\eta,\nu)$ denotes the product of the remaining $N-1$ eigenvalues
of $I - D_x f^\cS(0;0,\eta,\nu)$.
$\mathcal{P}(\eta,\nu)$ is $C^{K-1}$ and $\mathcal{P}(0,0) \ne 0$
since the algebraic multiplicity of the eigenvalue 1 of $M_\cS(0)$ is one.
Let
\begin{equation}
\kappa = \mathcal{P}(0,0) \;, \nonumber
\end{equation}
then, using (\ref{eq:lamstabmult}),
\begin{eqnarray}
\det(I - D_x f^\cS(0;0,\eta,\nu)) & = & (-c_3 \eta - c_4 \nu + O(2))(\kappa + O(1))
\nonumber \\
& = & -c_3 \kappa \eta - c_4 \kappa \nu + O(2) \;. \nonumber
%& = & k_1 \eta + k_2 \nu + O(2)
\end{eqnarray}
Using (\ref{eq:k1k2}), (\ref{eq:exp_res_6}) and (\ref{eq:exp_res_7})
we arrive at
\begin{equation}
c_3 = -\frac{\check{\delta}}{\kappa t_d} \;, \qquad
c_4 = -\frac{\check{\delta}}{\kappa t_{(l-1)d}} \;. \nonumber
\end{equation}
\item
When $\eta = \nu = 0$,
the $\check{\cS}$-cycle has two points on the \sw~and coincides with the
$\hat{\cS}$-cycle.
Both $\check{x}_0(\mu,0,0)$ and $\check{x}_d(\mu,0,0)$ are fixed points of $f^\cS$.
Let $\lambda_1(\mu)$
and $\lambda_2(\mu)$
be the respective eigenvalues of
$D_x f^\cS(\check{x}_0(\mu,0,0);\mu,0,0)$ and
$D_x f^\cS(\check{x}_d(\mu,0,0);\mu,0,0)$,
$C^{K-1}$ dependent on $\mu$ with $\lambda_1(0) = \lambda_2(0) = 1$.
As above, since $\check{s}_0(\mu,0,0) = 0$, from (\ref{eq:sPrime2})
\begin{equation}
\lambda_1(\mu) =
\frac{\partial s'}{\partial s}(\check{s}_0(\mu,0,0);\mu,0,0) =
1 + c_2 \mu + O(\mu^2)) \;, \nonumber
\end{equation}
and then since $\check{s}_d(\mu,0,0) = t_d \mu + O(\mu^2)$,
\begin{eqnarray}
\lambda_2(\mu) =
\frac{\partial s'}{\partial s}(\check{s}_d(\mu,0,0);\mu,0,0) & = &
1 - \frac{2 c_2}{t_d}(t_d \mu + O(\mu^2)) + c_2 \mu + O(\mu^2) \nonumber \\
& = & 1 - c_2 \mu + O(\mu^2)) \;. \nonumber
\end{eqnarray}
Again, as above,
\begin{eqnarray}
\det(I - D_x f^\cS(\check{x}_0(\mu,0,0);\mu,0,0)
& = & -c_2 \kappa \mu + O(\mu^2) \;, \nonumber \\
{\rm and~~~} \det(I - D_x f^\cS(\check{x}_d(\mu,0,0);\mu,0,0)
& = & c_2 \kappa \mu + O(\mu^2) \;. \nonumber
\end{eqnarray}
But, recall (\ref{eq:k0}), so
\begin{equation}
c_2 = -\frac{k_0}{\kappa} \;,
\nonumber
\end{equation}
and by (\ref{eq:m2})
\begin{equation}
\tilde{k_0} = -k_0 \;.
\label{eq:k0relation}
\end{equation}
Substitution of (\ref{eq:k0relation}) into (\ref{eq:phi_2})
verifies (\ref{it:phi_2}) of the lemma
(using also (\ref{eq:exp_res_6}) and (\ref{eq:exp_res_7})).
It now only remains to demonstrate (\ref{it:Lambda}) and (\ref{it:zeta12})
of the lemma.
\end{enumerate}
We have shown that the restriction of $f^\cS$ to $W^c$ is
\begin{equation}
s'(s;\xi) = s + \frac{k_0}{\kappa t_d} s^2
- \frac{k_0}{\kappa} \mu s
- \frac{\check{\delta}}{\kappa t_d} \eta s
- \frac{\check{\delta}}{\kappa t_{(l-1)d}} \nu s
+ \frac{\check{\delta}}{\kappa} \mu \eta + O(3) \;.
\label{eq:sPrimeF}
\end{equation}
We now look for saddle-node bifurcations of (\ref{eq:sPrimeF}).
Since $\frac{\partial s'}{\partial s}(0;0) = 1$ and
$\frac{\partial^2 s'}{\partial s^2}(0;0) \ne 0$,
%\begin{equation}
%\frac{\partial s'}{\partial s}(s;\xi) =
%1 + \frac{2 k_0}{\kappa t_d} s -
%\frac{k_0}{\kappa} \mu -
%\frac{\check{\delta}}{\kappa t_d} \eta -
%\frac{\check{\delta}}{\kappa t_{(l-1)d}} \nu + O(2) \;, \nonumber
%\end{equation}
%which is $C^{K-2}$.
the implicit function theorem implies that
there exists a unique $C^{K-2}$ function, $\psi$ such that
$\frac{\partial s'}{\partial s}(\psi(\xi);\xi) = 1$ for small $\xi$ and
\begin{equation}
\psi(\xi) = \frac{t_d}{2} \mu + \frac{\check{\delta}}{2 k_0} \eta + 
\frac{\check{\delta} t_d}{2 k_0 t_{(l-1)d}} \nu + O(2) \;.
\label{eq:psiProof}
\end{equation}
Then saddle-node bifurcations occur when $s'(\psi(\xi);\xi) = \psi(\xi)$.
Let
\begin{equation}
\Lambda(\xi) = -\frac{4 k_0 \kappa}{\check{\delta}^2 t_d}
\big( s'(\psi(\xi);\xi) - \psi(\xi) \big) \;.
\label{eq:LambdaProof}
\end{equation}
Substitution of (\ref{eq:sPrimeF}) and (\ref{eq:psiProof})
into (\ref{eq:LambdaProof}) yields (\ref{eq:Lambda}).
To complete verification of (\ref{it:Lambda}) of the lemma
we formally show that (\ref{eq:sPrimeF}) has a saddle-node bifurcation
at $\Lambda(\xi) = 0$ whenever $\mu > 0$ by
verifying that all nondegeneracy conditions
of the saddle-node bifurcation theorem \cite{GuHo86}
are indeed satisfied:
\begin{enumerate}
\renewcommand{\labelenumi}{\bf \arabic{enumi})}
\item
by construction, $\displaystyle
\frac{\partial s'}{\partial s}(\psi(\xi),\xi) = 1$
when $\Lambda(\xi) = 0$,
\item
$\displaystyle \frac{\partial s'}{\partial \tilde{\nu}} =
\frac{\check{\delta} t_d}{2 \kappa} \mu + O(2) \ne 0$ when $\mu > 0$
verifying transversality (where $\tilde{\nu}$ is given in (\ref{eq:xiTilde})),
\item
$\displaystyle \frac{\partial^2 s'}{\partial s^2} =
\frac{k_0}{\kappa t_d} + O(1) \ne 0$.
\end{enumerate}

{\bf Step 4:} {\em Compute the intersections of $\Lambda = 0$
with $\eta = 0$ and $\nu = \phi_2(\mu,\eta)$ to obtain
the functions $\zeta_1$ and $\zeta_2$.}\\
To determine where $\Lambda(\xi) = 0$ intersects $\eta = 0$ it is
natural to look at $\Lambda(\mu,0,\nu) = 0$,
but this is insufficient for a derivation of $\zeta_1(\mu)$
because we may not apply the implicit function theorem
to $\Lambda(\xi)$ since it contains no linear terms.
Instead we use the fact that
when $\eta = 0$, the $\check{\cS}$-cycle is also an $\cS$-cycle.
We have $\Lambda = 0$ when, in addition, this periodic solution
has an associated multiplier of 1 as an $\cS$-cycle.
This occurs when (using (\ref{eq:k0}), (\ref{eq:k1k2}) and (\ref{eq:exp_res_6}))
\begin{equation}
\det(I - D_x f^\cS(\check{x}_0(\mu,0,\nu);\mu,0,\nu) =
k_0 \mu + \frac{\check{\delta}}{t_{(l-1)d}} \nu + O(2) = 0
\;, \nonumber
\end{equation}
Application of the implicit function theorem to the previous equation produces
\begin{equation}
\nu = \zeta_1(\mu) = -\frac{k_0 t_{(l-1)d}}{\check{\delta}} \mu + O(\mu^2) \;.
\nonumber
\end{equation}
Moreover,
\begin{equation}
\Lambda(\mu,0,\nu) = \frac{1}{t_{(l-1)d}^2} (\nu - \zeta_1(\mu))^2 +
o(|\nu - \zeta_1(\mu)|^2) \;, \nonumber
\end{equation}
and so for $\mu > 0$, $\Lambda \ge 0$ on the $\nu$-axis.

The curve $(\mu,\zeta_2(\mu),\phi_2(\mu,\zeta_2(\mu))$
along which $\Lambda(\xi) = 0$ intersects the surface $\phi_2(\mu,\eta)$,
is easily computed in a similar fashion.
When $\mu > 0$, $\Lambda \ge 0$ along $\nu = \phi_2(\mu,\eta)$
and so $\Lambda \le 0$ only when $\eta \le 0$ and
$\nu \ge \phi_2(\mu,\eta)$ and stated in the final part of the lemma.
\end{proof}

\begin{proof}[Proof of \Ther~\ref{th:unfold}]

We begin by determining the region of admissibility of
the $\check{\cS}$-cycle, $\{ \check{x}_i \}$.
From (\ref{eq:yidef}), $\check{x}_i(\xi) = \mu (y_i + O(1))$,
thus since by assumption $y_0$ and $y_{ld}$ are the only
points of $\{ y_i \}$ that lie on the \sw~for
small $\xi$ with $\mu > 0$,
here $\check{x}_i(\xi)$
lies on the same side of the \sw~as $y_i$ for each $i \ne 0, ld$.
The $n$-cycle, $\{ y_i \}$, is admissible by assumption (for $\mu > 0$),
thus $\{ \check{x}_i(\xi) \}$ is admissible
exactly when $\check{s}_0, \check{s}_{ld} \ge 0$
(since $\check{\cS}_0 = \check{\cS}_{ld} = \sR$).
By (\ref{eq:cu_0}) and (\ref{eq:cu_ld})
$\check{s}_0, \check{s}_{ld} \ge 0$ when $\eta, \nu \ge 0$, therefore the
$\check{\cS}$-cycle is admissible in $\Psi_1$
as stated in the theorem.

Similarly, for $\mu > 0$ the $\hat{\cS}$-cycle is admissible
exactly when $\hat{s}_0, \hat{s}_{ld} \le 0$
(since $\hat{\cS}_0 = \hat{\cS}_{ld} = \sL$).
By \Lemm~\ref{le:expansions}(\ref{it:phi_1}),
$\hat{s}_{ld}(\xi) = 0$ when $\eta = \phi_1(\mu,\nu)$.
By (\ref{eq:exp_res_4}), (\ref{eq:exp_res_7})
and \Lemm~\ref{le:expansions}(\ref{it:ddFormula}),
$\frac{\partial \hat{s}_{ld}}{\partial \eta}(\xi) =
-\frac{t_{(l-1)d}}{t_{-d}} + O(\xi)$
which is positive for small $\xi$,
hence $\hat{s}_{ld} \le 0$ for $\eta \le \phi_1(\mu,\nu)$
when $\mu > 0$.
Similarly by \Lemm~\ref{le:expansions}(\ref{it:phi_2}),
$\hat{s}_0(\xi) = 0$ when $\nu = \phi_2(\mu,\eta)$
and by (\ref{eq:exp_res_2}), (\ref{eq:exp_res_6})
and \Lemm~\ref{le:expansions}(\ref{it:ddFormula}),
$\frac{\partial \hat{s}_0}{\partial \nu}(\xi) =
-\frac{t_d}{t_{(l+1)d}} + O(\xi)$
which is positive for small $\xi$,
hence $\hat{s}_0 \le 0$ for $\nu \le \phi_2(\mu,\eta)$ when $\mu > 0$.
Therefore the $\hat{\cS}$-cycle is admissible in $\Psi_2$.

It remains to verify admissibility of $\cS$-cycles.
For the theorem in \cite{SiMe09} this was straight-forward
since, if it existed, the $\cS$-cycle was unique.
The situation here is more complicated because
there may be two coexisting, admissible $\cS$-cycles.
In the proof of \Lemm~\ref{le:expansions}
we determined the restriction of $f^\cS$ to
the center manifold through $(s;\xi) = (0;0)$, (\ref{eq:sPrimeF}).
When $\mu > 0$ and $\Lambda(\xi) > 0$, locally
the map (\ref{eq:sPrimeF}) has two distinct fixed points, say,
$s_{0,1}$ and $s_{0,2}$.
We will denote the
corresponding $\cS$-cycles of (\ref{eq:pwsMap}) by
$\{ x_{i,1} \}$ and $\{ x_{i,2} \}$
and assume $s_{0,1} \ge s_{0,2}$.
For small $\xi$ within $\{ \xi ~|~ \mu > 0, \Lambda(\xi) > 0 \}$,
$s_{0,1}$ and $s_{0,2}$ are $C^{K-1}$ functions of $\xi$.

On the surface $\Lambda(\xi) = 0$ the two solutions coincide
(see (\ref{eq:psiProof})):
\begin{equation}
s_{0,1}(\xi) = s_{0,2}(\xi) = \psi(\xi),
{\rm ~when~} \Lambda(\xi) = 0.
\nonumber
\end{equation}
At the intersection of $\Lambda(\xi) = 0$ and $\eta = 0$,
namely $(\mu,0,\zeta_1(\mu))$ (see \Lemm~\ref{le:expansions}(\ref{it:zeta12})),
$s_{0,1} = s_{0,2} = 0$.
Thus by (\ref{eq:psiProof}), on $\Lambda(\xi) = 0$,
$s_{0,1} = s_{0,2} < 0$ when $\nu < \zeta_1(\mu)$ and
$s_{0,1} = s_{0,2} > 0$ when $\nu > \zeta_1(\mu)$.

Now, for $\mu > 0$, $s_{0,1}$ and $s_{0,2}$ can only be zero if
$\eta = 0$ because if $s=0$ is a fixed point of (\ref{eq:sPrimeF})
then the corresponding $\cS$-cycle would also be an $\check{\cS}$-cycle
which for $\mu > 0$ must be $\{ \check{x}_i \}$,
so then $\check{s}_0(\xi) = 0$ and by (\ref{eq:cu_0})
we would necessarily have $\eta = 0$.
Consequently one of $s_{0,1}$ and $s_{0,2}$ is zero when $\eta = 0$
and $s_{0,1}$ and $s_{0,2}$ are both nonzero when $\eta \ne 0$.
Since we assume $s_{0,1} > s_{0,2}$ for $\Lambda(\xi) \ne 0$,
when $\mu > 0$ and $\eta = 0$ we must have
$s_{0,1} = 0$ when $\nu \le \zeta_1(\mu)$ and
$s_{0,2} = 0$ when $\nu \ge \zeta_1(\mu)$.
Consequently $s_{0,1} < 0$ in $\Psi_2 \cup \Psi_3$
and $s_{0,2} < 0$ in $\Psi_1 \cup \Psi_2 \cup \Psi_3$.

Before we are able to perform a similar analysis of $s_{i,j}$ for $i \ne 0$,
we find it necessary to
first derive an expression for $s_{i,j}$ in terms of $t_i$ and $t_{d+i}$.
Recall that the center manifold, $W^c$,
is given by (\ref{eq:Hproof}) where
$M_\cS(0) v = v$ and $e_1^{\sf T} v = 1$.
When $\xi = 0$, $y_0$ and $y_d$ are both fixed points of $h^\cS$,
thus $(I-M_\cS(0)) y_0 = P_\cS(0) b(0) = (I-M_\cS(0)) y_d$ and so
\begin{equation}
(I - M_\cS(0)) (y_0 - y_d) = 0 \;,
\nonumber
\end{equation}
and since $y_0 \ne y_d$ (\Lemm~\ref{le:shrPoint}) and
the eigenvalue $1$ of the matrix $M_\cS(0)$ has algebraic multiplicity one,
$y_0 - y_d$ is a scalar multiple of $v$.
Due to the specified vector scaling we have
\begin{equation}
v = \frac{1}{t_d}(y_d - y_0) \;.
\label{eq:vFormula}
\end{equation}
Combining (\ref{eq:Hproof}) and (\ref{eq:vFormula}) yields
\begin{equation}
x_{0,j}(\xi) = \left( \mu - \frac{s_{0,j}(\xi)}{t_d} \right) y_0 +
\frac{s_{0,j}(\xi)}{t_d} y_d + O(2) \;.
\label{eq:x0j}
\end{equation}
This may be generalized to an expression for $x_{i,j}(\xi)$ using
$x_{i+1,j} = \mu b + A_{\cS_i} x_{i,j} + O(2)$
and $y_{i+1} = \mu b(0) + A_{\cS_i}(0) y_i$, from which we deduce
\begin{equation}
s_{i,j}(\xi) = \left( \mu - \frac{s_{0,j}(\xi)}{t_d} \right) t_i +
\frac{s_{0,j}(\xi)}{t_d} t_{d+i} + O(2) \;,
\label{eq:sij}
\end{equation}
and hence
\begin{equation}
s_{i,1}(\xi) - s_{i,2}(\xi) = -\frac{1}{t_d}
(t_i - t_{d+i}) (s_{0,1}(\xi) - s_{0,2}(\xi)) + O(2) \;.
\nonumber
\end{equation}
Therefore for small $\xi > 0$
with $\mu > 0$ and $\Lambda(\xi) > 0$,
$s_{i,1} > s_{i,2}$ if $t_i > t_{d+i}$ and 
$s_{i,1} < s_{i,2}$ if $t_i < t_{d+i}$
(because we assumed $s_{0,1} > s_{0,2}$).

Above we showed that when
$\eta = \nu = 0$ and $\mu > 0$, $s_{0,1} = 0$ and $s_{0,2} < 0$.
Thus here $s_{ld,1} = 0$ and $s_{ld,2} > 0$
and by \Lemm~\ref{le:expansions}, along $\eta = \phi_1(\mu,\nu)$,
$s_{ld,1} = 0$ and $s_{ld,2} > 0$.
From this is easily follows that
$s_{ld,1} > 0$ in $\Psi_2 \cup \Psi_3$
and $s_{ld,2} > 0$ in $\Psi_1 \cup \Psi_2 \cup \Psi_3$.
Similarly when $\nu = 0$ and $\mu > 0$,
$s_{(l-1)d,1} < 0$ and $s_{(l-1)d,2} = 0$ and consequently
$s_{(l-1)d,1} < 0$ in $\Psi_1 \cup \Psi_2 \cup \Psi_3$ and
$s_{(l-1)d,2} < 0$ in $\Psi_1 \cup \Psi_3$.
By analogous arguments,
$s_{-d,1} > 0$ in $\Psi_1 \cup \Psi_2 \cup \Psi_3$ and
$s_{-d,2} > 0$ in $\Psi_1 \cup \Psi_3$.
By (\ref{eq:sij}), for $i \ne 0, (l-1)d, ld, -d$,
$s_{i,1}$ and $s_{i,2}$ have the desired sign for admissibility 
for small $\xi$ with $\mu > 0$.
The above statements show that
$\{ x_{i,1} \}$ is admissible in $\Psi_2 \cup \Psi_3$
and $\{ x_{i,2} \}$ is admissible in $\Psi_1 \cup \Psi_3$
which completes the proof.
\end{proof}

%\bibliographystyle{unsrt}
%\bibliography{../PhDThesis}

\end{document}